\font\elevensc=cmcsc10 scaled\magstephalf
\font\teneu=eufm10 scaled\magstep1\font\seveneu=eufm7 scaled\magstep1
                              \font\fiveeu=eufm5 scaled\magstep1
\font\tenlv=msbm10 scaled\magstep1\font\sevenlv=msbm7 scaled\magstep1
                              \font\fivelv=msbm5 scaled\magstep1
\def\eu{\fam\eufam\teneu}
\def\lv{\fam\lvfam\tenlv}
\newcommand{\defi}[1]{\textsf{#1}}
\newcommand{\clA}{{\mathcal A}}
\newcommand{\clB}{{\mathcal B}}
\newcommand{\clK}{{\mathcal K}}
\newcommand{\clL}{{\mathcal L}}
\newcommand{\clO}{{\mathcal O}}
\newcommand{\clR}{{\mathcal R}}
\newcommand{\clS}{{\mathcal S}}
\newcommand{\clX}{{\mathcal X}}
\newcommand{\clY}{{\mathcal Y}}
\newcommand{\euD}{{\eu D}}
\newcommand{\euX}{{\eu X}}
\newcommand{\euY}{{\eu Y}}
\newcommand{\lvA}{{\lv  A}}
\newcommand{\lvF}{{\lv F}}
\newcommand{\lvP}{{\lv P}}
\newcommand{\lvZ}{{\lv Z}}
\newcommand{\eum}{{\eu m}}
\newcommand{\euo}{{\eu o}}
\newcommand{\eup}{{\eu p}}
\newcommand{\euq}{{\eu q}}
\newcommand{\eur}{{\eu r}}
\DeclareMathOperator{\chr}{char}
\DeclareMathOperator{\Proj}{Proj}
\DeclareMathOperator{\Spec}{Spec}
\newtheorem{theorem}{\indent Theorem}[section]
\newtheorem{keylemma}[theorem]{\indent Key Lemma}
\newtheorem{lemma}[theorem]{\indent Lemma}
\newtheorem{LGP}[theorem]{\indent LGP}
\newtheorem{proposition}[theorem]{\indent Proposition}
\theoremstyle{definition}
\newtheorem{definition}[theorem]{\indent Definition}
\newtheorem{example/fact}[theorem]{\indent Example/Fact}
\newtheorem{fact}[theorem]{\indent Fact}
\newtheorem{fact/definition}[theorem]{\indent Fact/Definition}
\newtheorem{Gal-criterion}[theorem]{\indent {\rm Gal}-Criterion}
\newtheorem{remark/definition}[theorem]{\indent Remark/Definition}
\newtheorem{remark/notation}[theorem]{\indent Remark/Notation}
\newtheorem{definition/remark}[theorem]{\indent Definition/Remark}
\newtheorem{notations}[theorem]{\indent Notations}
\newtheorem{notations/facts}[theorem]{\indent Notations/Facts}
\newtheorem{remark}[theorem]{\indent Remark}
\newtheorem{hypo}[theorem]{\indent Hypothesis}
\newcommand{\mbl}{
\newcommand{\mbl}{\baselineskip20pt\lineskip3pt\lineskiplimit3pt}}
\newcommand{\dwn}[1]{\phantom{\Big|}\hhb{-6}%
\big\downarrow\rlap{$\vcenter{\hbox{$\scriptstyle{\!#1}$}}$}}
\newcommand{\hhb}[1]{\hbox to#1pt{}}
  \newcommand{\hor}[1]{\smash
     {\mathop{{\lgrghtar}}\limits^{\lower2pt\hbox{$\scriptstyle{#1}$}}}}
\newcommand{\horr}[1]{\smash
     {\mathop{{\lglgrghtar}}\limits^{\lower2pt\hbox{$\scriptstyle{#1}$}}}}
\newcommand{\horrr}[1]{\smash
     {\mathop{{\lglglgrghtar}}\limits^{\lower2pt\hbox{$\scriptstyle{#1}$}}}}
\newcommand{\ilim}[1]{\hbox to14pt{lim\kern-14pt\lower4.5pt%
\hbox{$\scriptstyle\longrightarrow$}%
\kern-8pt\lower8.5pt\hbox{$\scriptstyle{#1}$}}\hhb{3}}
\newcommand{\lgrghtar}{{\hhb2{\relbar\joinrel\rightarrow}\hhb2}}
\newcommand{\lglgrghtar}{{\hhb1{\relbar\joinrel\relbar\joinrel\rightarrow}\hhb1}}
\newcommand{\lglglgrghtar}{{\hhb1{\relbar\joinrel\relbar\joinrel%
\relbar\joinrel\rightarrow}\hhb1}}
\newcommand{\nmnm}[1]{{\elevensc #1}} 
\newcommand{\nmi}{{\phantom{'}}}
\newcommand{\oli}{\overline}
\newcommand{\onto}{{\hhb1\to\hhb{-16}\to\hhb0}}
\newcommand{\plim}[1]{\hbox to14pt{lim\kern-14pt\lower4.5pt%
\hbox{$\scriptstyle\longleftarrow$}\kern-8pt\lower8.5pt\hbox{$\scriptstyle{#1}$}}\hhb{3}}
\newcommand{\ratmap}{{\hhb2\hbox{-\hhb{1}-\hhb{1}-}\hhb{-1}%
\lower-1pt\hbox{$\scriptscriptstyle>$}\hhb2}}
\newcommand{\rsdp}{{\,\times\kern-3pt\lower-1pt%
\hbox{$\scriptscriptstyle|$\hhb3}}}
\newcommand{\sdp}{\hbox to10pt{\hss\hbox{\mathsurround=0pt$\times$\kern-1.6pt
       \hbox{\vrule height5.2pt width.6pt}\hbox to1.6pt{}}\hss}}
\newcommand{\vid}{{\hhb{-3.25}\not\hhb{-2.5}\lower-1.25pt%
\hbox{\mathsurround=0pt$\scriptscriptstyle\bigcirc$}}}
\newcommand{\vvidd}{{\lower-1pt\hbox{/}\kern-7pt{\hbox{O}}}}
\font\mrrm=cmr12 scaled\magstephalf
\newcommand{\alp}{\nu}
\newcommand{\alrh}{{\rho}}
\newcommand{\akk}{{l}}
\newcommand{\clOy}{{\clO_{\hhb{-.25}y}}}
\newcommand{\clOq}{{\clO_{\hhb{-.25}\euq}}}
\newcommand{\clOz}{{\clO_{\hhb{-.25}\eup}}}
\newcommand{\dgr}[1]{{\rm deg}\hbox{\mrrm(}#1\hbox{\mrrm)}}
\newcommand{\eupq}{{\euq}}
\newcommand{\eupz}{{\eup}}
\newcommand{\euox}{{\euo_x}}
\newcommand{\euoy}{{\euo_y}}
\newcommand{\hra}{\hookrightarrow}
\newcommand{\ix}{\imath}
\newcommand{\jx}{\jmath}
\newcommand{\kk}{{\hat\kappa}} 
\newcommand{\kbm}{{\bm k}} 
\newcommand{\kp}[1]{{\kappa_{#1}}}
\newcommand{\kzr}{{\lvF}}
\newcommand{\llll}{{\bm\lambda}}
\newcommand{\lps}[1]{(\hhb{-1}(#1)\hhb{-1})}
\newcommand{\lvAix}{{\lvA^{\hhb{-1}{\scriptscriptstyle|}\uix{\scriptscriptstyle|}}}}
\newcommand{\lvPix}{{\lvP^{{\scriptscriptstyle|}
                          \uix{\scriptscriptstyle|}}}}
\newcommand{\lvAixZ}{{\lvA^{\hhb{-1}
    {\scriptscriptstyle|}\uix{\scriptscriptstyle|}}_{\scriptscriptstyle\Srz}}}
\newcommand{\lvPixZ}{{\lvP^{{\scriptscriptstyle|}\uix
    {\scriptscriptstyle|}}_{\scriptscriptstyle\Srz}}}
\newcommand{\lvPt}[1]{\lvP^1_{\hhb{-1}t\hhb{0},\hhb{.5}#1}}
\newcommand{\lvAt}[1]{\lvA^1_{\hhb{.5}t\hhb{0},\hhb{.5}#1}}
\newcommand{\mPi}{{\textstyle\prod}}
\newcommand{\mx}{f}
\newcommand{\nx}{e}
\newcommand{\olix}{{\overline x}}
\newcommand{\piz}{\pi_0}
\newcommand{\rhal}{{\hhb{.5}\rho}}
\newcommand{\Srp}{{\bm Z}}
\newcommand{\Srz}{{\bm Z}_0}
\newcommand{\Scz}{{\scriptscriptstyle{\bm Z}_0}}
\newcommand{\scl}[1]{#1^{\scriptscriptstyle{\rm sep}}}
\newcommand{\shra}[1]{\!\!\!\hookrightarrow\!\!\!}
\newcommand{\micsm}[2]{\mathop{\textstyle\sum}\limits_{#1}^{#2}}
\newcommand\Spf{{\rm Spf}}
\newcommand{\str}[2]{{{}^*\hhb{-#1}#2}}
\newcommand{\TT}{{t}}
\newcommand{\TTmu}{\TT_\mu^{-1}}
\newcommand{\tht}{\theta}
\newcommand{\tlclK}{\clK^{^{\rm nr}}}
\newcommand{\tlclL}{\clL^{^{\rm nr}}}
\newcommand{\tlclO}{{\tilde\clO}}
\newcommand{\tlV}{{\tilde V}}
\newcommand{\tmu}{{t^{-1}}}
\newcommand{\ultrb}[1]{{\big(\mPi_x #1\big)/\hhb1{\eu U}}}
\newcommand{{\upi}}{{\varpi}}
\newcommand\uix{{\undr\ix}}
\newcommand{\ux}{\imath}
\newcommand{\undr}[1]{{\bm #1}}
\newcommand{\V}{V}
\newcommand{\W}{W}
\newcommand{\vK}{v\!_K^\nmi}
\newcommand{\vL}{v\!_L^\nmi}
\newcommand{\Wnx}{W_{\!\nx}}
\newcommand{\Wstr}{{\str1W}}
\newcommand{\whclO}{{\widehat{\clO}}}
\newcommand{\wstr}{{\str1w}}
\newcommand{\xbf}{{x}}
\newcommand{\y}{y}
\newcommand{\ybf}{{y}}
\title[Lifting of curves]{$\hbox to\hsize{\hss$\scriptstyle
{\eu ...meiner\hhb3 lieben\hhb3 Gerda\hhb3 gewidmet}$}$
\vskip25pt
{\Large Lifting} ${\bm{o\!f}}$ {\Large Curves}
\vskip10pt
 ---\ ${\bf 
 T\hhb1h\hhb1e\hhb1\ O\hhb1o\hhb1r\hhb1t\ 
   C\hhb1o\hhb1n\hhb1j\hhb1e\hhb1c\hhb1t\hhb1u\hhb1r\hhb1e}$ ---}
\author{Florian Pop}
\address{Department of Mathematics 
        \vskip0pt
        University of Pennsylvania
        \vskip0pt
        DRL, 209 S 33rd Street
        \vskip0pt
        Phila\-delphia, PA 19104. USA}
\email{pop@math.upenn.edu}
\urladdr{http://math.penn.edu/\~{}pop}
\begin{document}
%
%
\begin{abstract}
In this note we show that a special case of a recent result 
by Obus--Wewers (used as a black box) together with a 
deformation argument in characteristic $p$ leads to a proof 
of the Oort Conjecture in the general case. A boundedness
result is given as well.
\end{abstract}
\subjclass{Primary 12E, 12F, 12G, 12J; Secondary 12E30, 
           12F10, 12G99}

\keywords{Galois $G$-covers of curves, Witt vectors, 
                  Artin--Schreier--Witt theory, Hilbert ramification 
                  theory, smooth curves over valuation rings, 
                  lifting of $G$-covers, Oort Conjecture}
                  
\thanks{Supported by NSF grant DMS-1101397.}
\date{12.29.11 / 1.30.12 / 2.12.12}

\maketitle

%
%

  
%
%
\section{Introduction}
The aim of this note is to present a proof of the (classical)
Oort Conjecture, which is a question about lifting Galois 
covers of curves from characteristic~$p>0$ to characteristic 
zero. In one form or the other, this kind of question might 
well be considered math folklore, and it was also well known 
that in general the lifting  is not possible. The problem was 
systematically addressed and formulated by \nmnm{Oort} 
at least as early as~1987, see~\cite{Oo1} or rather~\cite{Oo2}. 
The general context of the lifting question/problem is as 
follows: Let $k$ be an algebraically closed field of characteristic 
$p>0$, and $W(k)$ be the ring of Witt vectors over~$k$. Let 
$Y \to X$ a $G$-cover of complete smooth $k$-curves, where
$G$ is a finite group. Then the question is whether there 
exists a finite extension of discrete valuation rings $W(k)\hra R$ 
and a $G$-cover $\clY_R\to\clX_R$ of complete smooth 
$R$-curves whose special fiber is the given $G$-cover 
$Y\to X$. The answer to this question in general is negative, 
because over $k$ there are curves of genus $g>1$ and huge 
automorphism groups, see e.g.~\nmnm{Roquette}~\cite{Ro2}, 
whereas in characteristic zero one has the Hurwitz bound $84(g-1)$ 
for the order of the automorphism group. The Oort Conjecture 
is about a subtle interaction between the ramification structure
and the nature of the inertia groups of generically Galois 
covers of curves, the idea being that if the inertia groups of 
a $G$-cover $Y\to X$ look like in characteristic zero, i.e., 
they are all cyclic, then the Galois cover should be smoothly
liftable to characteristic zero. 
\vskip7pt
{\bf Oort Conjecture}. {\it Let $k$ and $W(k)$ be as above.
Let $Y\to X$ be a possibly ramified 
$G$-cover of complete smooth $k$-curves having only 
cyclic groups as inertia groups. Then there exists a finite 
extension $R$ of $\,W(k)$ over which the $G$-cover $Y\to X$
lifts smoothly, i.e., there exists a $G$-cover $\clY_R\to\clX_R$ 
of complete smooth $R$-curves with special fiber $Y\to X$.\/}
\vskip7pt
There is also the {\bf local Oort conjecture}, which 
asserts that every finite cyclic extension $k[[t]]\hra k[[z]]$ 
is the canonical reduction of a cyclic extension 
$R[[T]]\hra R[[Z]]$ for some finite extension~$R$ of $W(k)$.
The local/global Oort Conjecrures are related as follows,
see~Fact~\ref{equivOC}, where further equivalent forms 
of the Oort Conjecture are given: Let $R$ be a 
finite extension of $W(k)$, and $\clX_R$ be a complete
smooth curve with special fiber $X$. Then a given $G$-cover 
$Y\to X$, $y\mapsto x$, with cyclic inertia groups lifts 
smoothly over a given $R$ iff the local cyclic 
extensions $k[[t_x]]:=\whclO_{X,x}\hra\whclO_{Y,y}=:k[[t_y]]$ 
lift smoothly over~$R$ for all $x\in X$ and $y\mapsto x$.
\vskip4pt
{\bf Notation:} Let $\deg_p(\euD)$ be the power of~$p$ 
in the degree of the different of $k[[t]]\hra k[[z]]$, and 
$\deg_p(\euD_x)$ be correspondingly defined for the 
local extension $k[[t_x]]\hra k[[t_y]]$ at $y\mapsto x$.
%
\begin{theorem}
\label{OC}
The Oort Conjecture holds. Moreover, for every $\delta$ 
there exists an algebraic integer $\pi_\delta$ such that 
for every algebraically closed field $k$, $\chr(k)=p$, the
following hold:
\vskip2pt
{\rm1)} Let $k[[t]]\hra k[[z]]$ be a cyclic extension 
with $\deg_p(\euD)\leq\delta$.
\vskip2pt
{\rm2)} Let $Y\to X$ be a $G$-cover with cyclic inertia groups 
and $\deg_p(\euD_x)\leq\delta$ for all $x\in X$.
\vskip2pt
\noindent
Then $k[[t]]\hra k[[z]]$ and $Y\to X$ are smoothly liftable 
over $W(k)[\pi_\delta]$. 
\end{theorem}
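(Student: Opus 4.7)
My strategy is to reduce Theorem~\ref{OC} to the local lifting problem, parametrize cyclic $p$-extensions of $k[[t]]$ by equiramified families in characteristic $p$, and transport smooth liftability from the Obus--Wewers special case to the general one by a deformation argument, tracking the field of definition throughout to obtain the uniform constant $\pi_\delta$.

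By Fact~\ref{equivOC} it suffices to prove the \emph{local} uniform assertion: for every $\delta$ there is an algebraic integer $\pi_\delta$ such that every cyclic extension $k[[t]]\hra k[[z]]$ with $\deg_p(\euD)\leq\delta$ lifts smoothly over $W(k)[\pi_\delta]$. Writing the cyclic Galois group as the product of its $p$-Sylow by a prime-to-$p$ cyclic factor, the tame part lifts over $W(k)$ via Kummer theory, and pulling back by the tame cover reduces the problem to cyclic $p^n$-extensions. Since the different of a wildly ramified $\mathbb{Z}/p^n$-cover has degree at least $p^n-1$, the hypothesis $\deg_p(\euD)\leq\delta$ forces $p^n\leq\delta+1$; hence the induction on $n$ runs over a finite range of exponents determined by $\delta$.

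For the cyclic $p^n$-case I would parametrize the extension via Artin--Schreier--Witt theory as $k[[z]]=k[[t]][\mathbf f]$ for a Witt vector $\mathbf f=(f_0,\dots,f_{n-1})\in W_n\bigl(k((t))\bigr)$, reading off the upper ramification jumps $u_1\leq\dots\leq u_n$ (and hence $\deg_p(\euD)$) from the pole orders of a normalised representative. I would then embed $\mathbf f$ into a one-parameter deformation $\mathbf f_s\in W_n\bigl(k[[s]]((t))\bigr)$ of cyclic $p^n$-extensions of $k[[t]]$ that is \emph{equiramified} along $s$ (preserving the whole jump sequence, hence the different), is the given extension at $s=0$, and at a generic $s$ satisfies the arithmetic condition on the jumps under which Obus--Wewers furnishes a smooth lift over a controlled finite extension $R\supset W(k)$.

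The main obstacle, and the technical heart of the argument, is the \emph{descent of liftability} along this characteristic $p$ deformation: from a smooth lift of the generic member one must deduce a smooth lift of the original extension at $s=0$, at the cost of only a bounded enlargement of $R$. I would realise this by constructing a cyclic $\mathbb{Z}/p^n$-cover of $\Spec R[[T,S]]$, possibly after a blow-up at $S=T=0$ and a ramified base change on $R$, whose reduction mod the uniformiser of $R$ is a smooth model of the equiramified family $\mathbf f_s$; liftability at $s=0$ then follows by flatness together with a compactness argument (Artin approximation, or an ultrapower/Henselian limit of the sort suggested by the macros of the preamble). Bounding the extra ramification of $R$ required in this descent \emph{purely in terms of $\delta$} is the delicate point.

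For the boundedness statement I would track at each step how the ring of definition depends on the input: the tame Kummer factor contributes only $p^n$-th roots of unity, the Artin--Schreier--Witt data involves finitely many coefficients of polar degree bounded in terms of $\delta$, and the descent step introduces a bounded ramified extension of $W(k)$. Each contribution depends only on $\delta$ and on the bounded exponent $n\leq\log_p(\delta+1)$, hence is independent of $k$; combining them produces a single $\pi_\delta$ that works uniformly. Reassembling the resulting local lifts at the finitely many ramified points of $Y\to X$ via Fact~\ref{equivOC} yields both assertions of Theorem~\ref{OC}.
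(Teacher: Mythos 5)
Your high-level outline is sound: reduce to the local problem, bound the exponent $n$ from $\delta$, parametrize via Artin--Schreier--Witt, deform in characteristic $p$ to bring the ramification into the regime covered by Obus--Wewers, and track fields of definition for the uniform $\pi_\delta$. However, the central step is internally contradictory and misses the essential new idea of the paper. You ask for a one-parameter deformation $\mathbf f_s$ that is \emph{equiramified} along $s$, ``preserving the whole jump sequence,'' yet also satisfies the Obus--Wewers arithmetic condition at generic $s$. Those two demands are incompatible: whether a jump sequence satisfies the no-essential-jump condition is determined by the jump sequence itself, so a deformation that preserves the jumps cannot change the verdict. What the paper actually does (Key Lemma~\ref{keylemma1}, via Lemma~\ref{combinlemma}) is completely different in character: the deformation over $\euo=k[[\upi]]$ is \emph{not} a deformation of the local extension $k[[t]]\hra k[[z]]$ with a single branch point, but of the whole KG-cover of $\lvP^1$, and its generic fiber over the rigid open disc has $N=1+q_1+\dots+q_\nx$ \emph{distinct} branch points $x_1,\dots,x_N$, each carrying a shorter jump sequence with no essential jumps. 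The individual jump sequences change radically; what is preserved is the \emph{total degree of the different}, which is exactly what Kato's smoothness criterion (Fact~\ref{KCR}) requires for the family over $\euo$ to be a smooth curve, and what the combinatorics of Lemma~\ref{combinlemma} ensures. Your formulation never allows the branch locus to break apart, so no characteristic~$p$ deformation of the kind you describe can exist when the original extension has essential jumps.

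The second step, ``descent of liftability along the char $p$ deformation,'' is also not quite how the paper proceeds. There is no descent along $s$; rather, one \emph{composes valuations} (Lemma~\ref{transsmooth}): the equal-characteristic valuation ring $\clO_0$ of $\euo$ prolonged to $l=\oli\kk$ and the mixed-characteristic valuation ring $\clO_1$ supplied by Obus--Wewers over $l$ compose to a rank-two mixed-characteristic valuation ring $\clO$ over which $Y\to\lvPt k$ lifts smoothly. The reduction from $\clO$ to a discrete valuation ring $W(k)[\pi_\delta]$ with \emph{uniformly bounded} ramification is then the content of Proposition~\ref{specprop}, whose proof uses the ultraproduct to pass to the generic point $\eta_\uix$ of the parameter space $\lvAix$, then de Jong's alterations to find a strictly semi-stable model whose codimension-one points give discrete valuation rings $\Srp_y$ with $\pi_\y^{n_\y}=\piz$ for $n_\y$ bounded by $\dim-1$. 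Your ``flatness plus Artin approximation/ultrapower'' instinct points in roughly the right direction, but leaves out the whole apparatus (Zariski density of $\Sigma_\uix$, de Jong, the factorization $(**)$ of the ultraproduct point through a projective model) by which the bound on the extra ramification of $R$ is actually obtained. In short: the reduction steps and the appeal to Obus--Wewers are right, but the mechanism of the characteristic-$p$ deformation is wrong as stated, and the boundedness argument is an acknowledged but unfilled hole.
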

%
{\bf Historical Note}: The first evidence for the Oort 
Conjecture is the fact that the conjecture holds for 
$G$-covers $Y\to X$ which have tame ramification only, 
i.e., $G$-covers whose inertia groups are cyclic of the 
form $\lvZ/m$ with $(p,m)=1$. Indeed, the lifting of 
such $G$-covers follows from the famous 
{\it Grothendieck's specialization theorem\/} for the 
tame fundamental group, see e.g.,~SGA~I. 
The first result which involved typical wild ramification 
is \nmnm{Oort--Sekiguchi--Suwa}~\cite{OSS} which 
tackled the case of $\lvZ/p$-covers. It was followed 
by a quite intensive research 
activity, see the survey article by \nmnm{Obus}~\cite{Ob} 
as well as the bibliography list at the end of this 
note. \nmnm{Garuti}~\cite{Ga1},~\cite{Ga2} contains a 
lot of foundational work and beyond that showed that 
every $G$-cover $Y\to X$ has (usually) non-smooth 
liftings with a well understood geometry. This aspect 
of the problem was revisited recently by \nmnm{Saidi}~\cite{Sa}, 
where among other things a systematic discussion 
of the (equivalent) forms of the Oort Conjecture is given. The 
paper~\cite{GM1} by \nmnm{Green--Matignon} contains 
further foundational work and gives a positive answer 
to the Oort Conjecture in the case of inertia groups of 
the form $\lvZ/mp^e$ with $(p,m)=1$ and $e\leq2$. 
Their result relies on the Sekiguchi--Suwa theory, 
see~\cite{SS1} and~\cite{SS2}. The paper 
\nmnm{Bertin--M\'ezard}~\cite{B-M} addresses the 
deformation theory for covers, whereas 
\nmnm{Chinburg--Guralnick--Harbater} \cite{CGH1}, 
\cite{CGH2} initiated the study of the so called {\it Oort 
groups,\/} and showed that the class of Oort groups 
is quite restrictive. Last but not least, the very recent 
result by \nmnm{Obus--Wewers}~\cite{O--W}, see
rather \nmnm{Obus}~\cite{Ob},~Theorem~6.28, solves 
the Oort Conjecture in the case the inertia groups are 
of the form $\lvZ/mp^e$ for $(p,m)=1$~and~$e\leq 3$, 
and {\it essential for the method of this note,\/} when 
the upper ramification jumps are subject to some 
explicit (strong) restrictions, see the explanations 
at~Remark~\ref{remarkOW} in section~4 for details. 
\vskip5pt
{\bf About the proof}: Concerning technical tools,
we use freely a few of the foundational results from the 
papers mentioned above. The main novel tools for the 
proof are Key Lemma~\ref{keylemma1} and its global
form~Theorem~\ref{charpOC}, and second,
Key~Lemma~\ref{keylemma2}, which is actually a very 
special case of Lemma~6.27 from~\nmnm{Obus}~\cite{Ob}, 
see section~4 for precise details and citations. All these
results will be used as ``black boxes'' in the proof of
the Oort Conjecture, given in section~4. Concerning the
idea of the proof, there is little to say: The point is to 
first deform a given $G$-cover $Y\to X$ to a cover 
$\clY_\euo\to\clX_\euo$ over $\euo=k[[\upi]]$ in such a 
way that the ramification of the deformed cover has no 
\defi{essential upper jumps} as defined/introduced at 
the beginning of section~3, then apply the local-global
principles, etc. 
\vskip2pt
Maybe it's interesting to mention that the first variant
of the proof (January, 2011) was shorter, but relied 
heavily on model theoretical tools and was not effective 
(concerning the finite extensions of $W(k)$ over which 
the smooth lifting can be realized). 
\vskip5pt
{\bf Acknowledgements(...)} If my recollection is correct, during 
an MFO Workshop in 2003~or~so, someone asked what
should be the ``characteristic $p$ Oort Conjecture,'' but
it seems that nobody ever followed up (successfully) on 
that idea.
\vskip5pt
\hhb{0}
%
%
%
\section{Reviewing well known facts}
Throughout this section, $k$ is an algebraically closed
field with ${\rm char}(k)=p>0$. All the other fields will be 
field extensions of $k$, in particular will be fields of
characteristic~$p$.
\vskip7pt
\noindent
A) {\it Reviewing higher ramification for cyclic extensions\/}
\vskip5pt
Let $K$ be a complete discrete valued field of positive
characteristic $p>0$, with valuation ring $R$ and having
as residue field an algebraically closed field $k$. Let 
$L|K$ be a finite Galois  extension, say with Galois 
group $G={\rm Gal}(L|K)$. Since $K$ is complete, the 
valuation $\vK$ of $K$ as a unique prolongation to $L$.
And since the residue field $k$ of $K$ is algebraically 
closed, $L|K$ is totally ramified, i.e., $[L:K]=e(L|K)$.
Finally, let $\vL$ be the normalized valuation of $L$, 
hence $\vL(L^\times)=Z$ and ${1\over[L|K]}\vL=\vK$ on $K$.
\vskip2pt
Recall that the lower ramification 
groups of $L|K$ or of $G$ are defined as follows: 
Let $z\in L$ be a uniformizing parameter. For every 
$\jmath$ we set $G_\jx:=\{\sigma\in G\mid\vL(\sigma z-z)>\jx\}$ 
and call it the $\jx^{\rm th}$ \defi{lower ramification group} 
of $L|K$ or of $G$. Clearly, $G=G_0$ is the inertia 
group of $L|K$, and $G_1$ is the ramification group 
of $L|K$, thus the Sylow $p$-group of $G$, and $G_\jx=1$ 
for $\jx$ sufficiently large. In particular, $G=G_1$ iff $G$ 
is a $p$-group iff $L|K$ is totally wildly ramified. 
\vskip2pt
The first important fact about the lower ramification groups 
$(G_\jx)_\jx$ is {\it Hilbert's different formula,\/} which gives an 
estimate for the degree $\dgr{\euD_{L|K}}$ of the different of
$L|K$ in terms of the orders of the lower ramification groups,
see e.g., \nmnm{Serre}~\cite{Se},~IV,~\S1:
\[
\deg(\euD_{L|K})=\sum_{\jx=0}^\infty\big(|G_\jx|-1\big).
\]
We further denote by $\jx_\alrh$ the \defi{lower jumps} 
for $L|K$, or of $G$, as being the numbers satisfying 
$G_{\jx_\alrh}\neq G_{\jx_\alrh+1}$. In particular, 
setting $\jx_{-1}=-1$ and $\jx_0=0$, and denoting the
upper jumps for~$L|K$ by $\jx_0\leq\jx_1\leq\dots\leq\jx_r$, 
one has that $\jx_r=\max\,\{\hhb2\jx \mid G_\jx\neq1\,\}$. 
\vskip2pt
Now suppose that $L|K$ is a cyclic extension with
$G=\lvZ/p^\nx$. Then $G=G_0=G_1$ by the discussion above,
and every subgroup of $G$ is a lower ramification group
for $L|K$. Thus one has precisely $\nx$ lower positive jumps 
$\jx_1\leq\dots\leq\jx_\nx$, and $G_{\jx_1}\geq\dots\geq G_{\jx_\nx}$
are precisely the $\nx$ non-trivial subgroups of $G=\lvZ/p^\nx$.
Finally, the Hilbert's Different Formula becomes:
\begin{eqnarray*} 
\textstyle
\deg(\euD_{L|K})=p^\nx-1+\sum_{\alrh=1}^\nx
(\jx_\alrh-\jx_{\alrh-1})\big(|G_{\jx_\alrh}|-1\big)
   &=&p^\nx-1+\sum_{\alrh=1}^\nx
              (\jx_\alrh-\jx_{\alrh-1})(p^{\nx-(\alrh-1)}-1)
\end{eqnarray*}

We recall that the lower ramification
subgroups behave functorially in the base field, i.e., 
if $K'|K$ is some finite sub-extension of $L|K$, and 
$G'\subseteq G$ is the Galois group of $L|K'$, then 
$G'_\jx=G_\jx\cap G'$. Since the lower ramification 
groups do not behave functorially with respect to 
Galois sub-extensions, one introduced the \defi{upper 
ramification groups} $G^{^{\,\scriptstyle\ix}}$ for 
$\ix\geq-1$ of $L|K$, which behave functorially under 
taking Galois sub-extensions, see \nmnm{Serre}~{Se},~IV,~\S3.
\vskip2pt
At least in the case of cyclic extensions $L|K$ with Gallois
group $G\cong\lvZ/p^\nx$, the formula which relates the 
lower ramification groups $G_{\!\jmath}$ to the upper ramification
groups $G^{^{\,\scriptstyle\ix}}$ is explicit via {\it Herbrand's 
formula,\/} see e.g.\ \nmnm{Serre}~\cite{Se},~IV,~\S3. 
Namely, if $\ux_0:=0$ and $\ux_1\leq \dots\leq \ux_\nx$ 
are the \defi{upper jumps} for $L|K$, then one has: 
\[
\jx_\alrh-\jx_{\alrh-1}=p^{\,\alrh-1}(\ux_\alrh-\ux_{\alrh-1}),
\quad \alrh=1,\dots,\nx.
\] 
\indent
Thus in the case $L|K$ is cyclic with Galois group
$G=\lvZ/p^\nx$, one can express the degree of the different
of $L|K$ in terms of higher ramification groups as follows:
\[
\deg(\euD_{L|K})=p^\nx-1+\sum_{\alrh=1}^\nx
(\ux_\alrh-\ux_{\alrh-1})p^{\alrh-1}(p^{\nx-(\alrh-1)}-1)=
\sum_{\alrh=1}^\nx(\ux_\alrh+1)(p^{\hhb1\alrh}-p^{\alrh-1}).
\]
\noindent
B) {\it Explicit formulas via Artin--Schreier--Witt Theory\/}
\vskip5pt
In the above notations, let $t$ be any uniformizing
parameter of the complete discrete valued field $K$
of characteristic $p > 0$. Then $K$ is canonically 
isomorphic to the Laurent power series field $K=k\lps t$ 
in the variable $t$ over $k$.
\vskip2pt
Recall that the Artin--Schreier--Witt theory gives a 
description of the cyclic $p$-power extensions of $K$ 
via finite length Witt vectors as follows, see e.g., 
\nmnm{Lang}~\cite{La}, or \nmnm{Serre}~\cite{Se},~II. Let 
$\clA$ be an integrally closed domain which is a $k$-algebra, 
and $\Wnx(\clA)=\{(a_1,\dots,a_\nx)\mid a_i\in\clA\}$
be the Witt vectors of length $\nx$ over $\clA$. 
Then the Frobenius morphism ${\rm Frob}$ of $\clA$
lifts to the \defi{Frobenius morphism} ${\rm Frob}_\nx$ 
of $\Wnx(\clA)$, and one defines the 
\defi{Artin--Schreier--Witt operator} 
$\wp_\nx:={\rm Frob}_\nx-{\rm Id}$ of 
$\clA$. If $\clA\hra\clA^{^{\rm nr}}$ is a ind-\'etale 
universal cover of $\clA$, one has the Artin--Schreier--Witt 
exact sequence 
\[
0\to \Wnx(\lvF_p)=\lvZ/p^\nx\hor{}\Wnx(\clA^{^{\rm nr}})
    \hor{\wp_\nx}\Wnx(\clA^{^{\rm nr}})\to0.
\]
of sheaves on ${\rm Et}(\clA)$. In particular, if 
${\rm Pic}(\clA)=0$, one gets a canonical isomorphism
\[
\Wnx(\clA)/{\rm im}(\wp_\nx)\to
     {\rm Hom}\big(\pi_1(\clA), \lvZ/p^\nx\big),
\]
which gives a canonical bijection between the cyclic 
subgroups $\langle{\undr a}\rangle\subset 
                                    \Wnx(\clA)/{\rm im}(\wp_\nx)$ 
and the integral \'etale cyclic extensions $\clA\hra\clA_{\undr a}$ 
with Galois group a quotient of $\lvZ/p^\nx$ by via
\[
\langle{\undr a}\rangle\mapsto \clA_{\undr a}:=\clA[{\undr x}]
\]
where ${\undr x}=(x_1,\dots,x_\nx)$ is any solution of the 
equation $\wp_\nx({\undr x})={\undr a}$.
\vskip2pt
In the special case $\clA=K=k\lps t$, one can make 
things more precise as follows. First, by Hensel's Lemma,
the class of every element in $\Wnx(K)/{\rm im}(\wp_\nx)$
contains a representative of the form ${\undr p}=(p_1,\dots,p_\nx)$
with $p_\alrh=p_\alrh(\tmu) \in k[\tmu] $ a polynomial 
in the variable $\tmu$ over $k$. Second, using the 
properties of the Artin--Schreier operator $\wp_\nx$, one 
can inductively ``reduce'' the terms of each $p_\alrh(\tmu)$ 
which contain powers of $\tmu$ to exponent divisible by 
$p$. If all the polynomials $p_\alrh(\tmu)$ have this property, 
one says that ${\undr p}=(p_1,\dots,p_\nx)$ is in \defi{standard 
form}. Following \nmnm{Garuti}~\cite{Ga3}, Thm.~1.1, and 
\nmnm{Thomas}~\cite{Tho},~Prop.~4.2, see also 
\nmnm{Obus--Priess}~\cite{O--P}, one can describe the 
upper jumps $\ux_1\leq\dots\leq \ux_\nx$ for the extension 
$L=K_{\undr p}$ with ${\undr p}=(p_1,\dots,p_\nx)$ in standard 
form and $p_1(\tmu)\neq0$ as follows:\footnote{Notice that 
we use the conventions $\deg(0)=-\infty$ and $\ix_0=0$.}
\[
\ux_\alrh=\max\,\{\hhb2p\hhb1\ux_{\alrh-1},\dgr{p_\alrh(\tmu)}\},
\quad \alrh=1,\dots,\nx.
\]
In particular, $\ux_\alrh\geq p^{\hhb1\alrh}\dgr{p_1(\tmu)} $, 
and the highest non-zero upper ramification index, 
i.e., the Artin conductor of $K_{\undr p}|K$ is
$\ux_\nx=\max\,\{\hhb2p^{\nx-\alrh}
             \dgr{p_\alrh(\tmu)}\mid \alrh=1,\dots,\nx\}$.
\vskip3pt
We make the following remark for later use: For 
${\undr a}=(a_1,\dots,a_\nx)$ an arbitrary Witt vector of 
length $\nx$ over $K$, and $K_{\undr a}|K$ as above one 
has: $[K_{\undr a}:K]=p^m$, where $m$ is minimal such 
that $(a_1,\dots,a_{e-m})\in{\rm im}(\wp_{e-m})$. In particular,
if $m<\nx$, then setting ${\undr b}=(b_1,\dots,b_m)$ with
$b_\alrh=a_{\alrh+\nx-m}$, one has: $K_{\undr a}|K$ is 
actually the cyclic extension $K_{\undr b}|K$ of degree 
$p^m$ of $K$, and one can compute the upper ramification 
indices of $K_{\undr a}=K_{\undr b}$ using the discussion 
above.  
\vskip7pt
\noindent
C) {\it Kato's smoothness criterion\/}
\vskip5pt
Let $k$ be as usual an algebraically closed field of 
characteristic $p>0$, and $\euo$ a complete discrete
valuation ring with quotient field $\kk={\rm Quot}(\euo)$
and residue field $k$. Let $\clA=\euo[[T]]$ be the power 
series ring over $\euo$. Then $\clR:=\clR\otimes_\euo\kk
=\kk\langle\hhb{-1}\langle T\rangle\hhb{-1}\rangle$
is the ring of power series in $T$ over $\kk$ having 
$v_\kk$-bounded coefficients. Thus $\clR$ is a Dedekind 
ring having $\Spec(\clR)$ in bijection with the points
of the open rigid disc $\euX=\Spf\,\clR$ of radius~$1$ 
over the complete valued field~$\kk$. Further, $\clA$ is 
a two dimensional complete regular ring with maximal 
ideal $(\pi,T)$ with $\clA\to\clA/(\pi,T)=k$, and 
$\euX=\Spec(\clR)$ is nothing but the complement 
of $V(\pi)\subset\Spec(\clA)$. Finally, 
$A:=\clA/(\pi)=k[[t]]$ is the power series ring in the 
variable $t:=T\,{\rm\big(mod}\,(\pi)\big)$, thus 
a complete discrete valuation ring. 
\vskip2pt
Let $\clK:={\rm Quot}(\clA)$ and $K:={\rm Quot}(A)=k\lps t$
be the fraction fields of $\clA$, respectively~$A$. Let 
$\clK\hra\clL$ be a finite separable field extension, 
and  $\clB\subset\clS$ be the integral closures of 
$\clA\subset\clR$ in the finite field extension $\clK\hra\clL$.
Then $\clB$ is finite $\clA$-module, and $\clS$ is a finite
$\clR$-module, in particular a Dedekind ring. 
\vskip2pt
Next let $\eur_1,\dots,\eur_r$ be the prime ideals 
of $\clB$ above $(\pi)$. Then each $\eur_i$ has 
height one, and the localizations $\clB_{\eur_i}$ are 
precisely the valuation rings of $\clL$ above the 
discrete valuation ring $\clA_{(\pi)}$ of $\clK$. And 
since $\clK\hra\clL$ is a finite separable extension, 
by the Finiteness Lemma, the fundamental equality 
holds:
\[
[\clL:\clK]=\sum_{i=1}^r e(\eur_i|\pi)\cdot f(\eur_i|\pi),
\]
where $e(\eur_i|\pi)$ and $f(\eur_i|\pi)=[\kappa(\eur_i):K]$ 
is the ramification index, respectively the residual degree 
of $\eur_i|\pi$. Therefore, if $v_\pi$ is the discrete valuation 
of $\clK$ with valuation ring $\clA_{(\pi)}$, one has
$K=\kappa(\pi)={\rm Quot}\big(\clA/(\pi)\big)=\kappa(v_\pi)$,
and the following are equivalent:
\vskip3pt
$\hhb2$i) There exists a prolongation $w$ of $v_\pi$ to
$\clL$ such that $[\clL:\clK]=[L:K]$, where $L:=\kappa(w)$.
\vskip3pt
ii) The ideal $\eur:=\pi\clB$ is a prime ideal of $\clB$, or
equivalently, $\pi$ is a prime element of $\clB$.
\vskip3pt
\noindent
In particular, if the above equivalent conditions i),~ii), hold,
then $\clB_\eur$ is the valuation ring of~$w$, and one has
$\kappa(\eur)={\rm Quot}\big(\clB/(\pi)\big)=\kappa(w)=L$. 
Further, $w$ is the unique prolongation of $v_\pi$ to $\clL$,
and $\eur=\pi\clB$ is the unique prime ideal of $\clB$ above
the ideal $\pi\clA$ of $\clA$.
\vskip2pt
We conclude by mentioning the following smoothness
criterion, which is a special case of the theory developed
in \nmnm{Kato}~\cite{Ka},~\S5; see also 
\nmnm{Green--Matignon}~\cite{GM1},~\S3, especially~3.4. 
\begin{fact} 
\label{KCR}
{\it In the above notations, suppose that 
$v_\pi$ has a prolongation $w$ to $\clL$ such that 
$[\clL:\clK]=[L:K]$, where $L:=\kappa(w)$ and 
$K=:\kappa(v_\pi)$. Let $A\hra B$ be the integral 
closure of $A=k[[t]]=\clA/(\pi)$ in the field extension $K\hra L$.
Let $\euD_{\clS|\clR}$ and $\euD_{B|A}$ be the differents
of the extensions of Dedekind rings $\clR\hra\clS$, 
respectively $A\hra B$. The following are equivalent:
\vskip2pt
$\hhb2${\rm i)}  $\Spec\clB$ is smooth over 
$\Spec\,\euo$.
\vskip2pt
{\rm ii)} The degrees of $\euD_{\clS|\clR}$ and 
$\euD_{B|A}$ are equal: $\deg(\euD_{\clS|\clR})=
\deg(\euD_{B|A})$.
\vskip3pt
If the above equivalent conditions are satisfied, then
there exists $Z\in\clB$ such that $\clB=\euo[[Z]]$ and
$B=\clS/(\pi)=k[[z]]$, where $z=Z\,\big({\rm mod}\,(\pi)\big)$.
\/}
\end{fact}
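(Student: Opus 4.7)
The plan is to exhibit the two sides of (ii) as the generic- and special-fibre degrees of a single discriminant ideal of $\clA$, and to show that the gap between $\deg\euD_{\clS|\clR}$ and $\deg\euD_{B|A}$ is precisely twice the $\delta$-invariant of the (possibly non-normal) closed fibre of $\clB$.

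\textbf{Setup.} The assumption $[\clL:\clK]=[L:K]$ forces $\eur:=\pi\clB$ to be the unique prime of $\clB$ above $(\pi)$, and the local extension $\clA_{(\pi)}\hra\clB_\eur$ to be ``unramified in $\pi$'' ($e=1$, $f=[L:K]$, residue extension $L|K$ separable). The ring $\clB$ is a $2$-dimensional normal local domain finite over the regular local $\clA$, hence Cohen--Macaulay; by Auslander--Buchsbaum it is $\clA$-free of rank $n=[\clL:\clK]$. Write $B':=\clB/\eur$, a $1$-dimensional local $A$-algebra with fraction field $L$ whose normalisation in $L$ is $B=k[[z]]$, and set $\delta:=\dim_k(B/B')\geq 0$.

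\textbf{Step 1 (smoothness $\Longleftrightarrow\delta=0$, and $\clB=\euo[[Z]]$).} Since $\clB$ is $\euo$-flat, smoothness at the closed point reduces to regularity of the special fibre $B'$, equivalently to $B'=B$, i.e.\ $\delta=0$. In that case, lifting a uniformiser $z\in B=B'$ to $Z\in\clB$, the map $\euo[[Z]]\to\clB$ surjects mod $\pi$ onto $k[[z]]$; by $\pi$-adic completeness and Nakayama it is surjective, and as both sides are $2$-dimensional regular local domains it is an isomorphism by dimension count, proving the concluding statement.

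\textbf{Step 2 (conservation of the discriminant).} Let $\Delta:=\mathrm{disc}(\clB/\clA)\subseteq\clA$ be the discriminant of the finite free extension. Base change along $\clA\hra\clR$ and $\clA\twoheadrightarrow A$ gives
\[
\Delta\cdot\clR=\mathrm{disc}(\clS/\clR),\qquad \Delta\bmod\pi=\mathrm{disc}(B'/A).
\]
The $\pi$-unramifiedness forces $v_\pi(\Delta)=0$; Weierstrass preparation in $\euo[[T]]$ writes a generator of $\Delta$ as $p(T)\cdot(\mathrm{unit})$ with $p(T)$ distinguished of some degree $d$, whence $\dim_\kk(\clR/\Delta\clR)=d=\dim_k(A/(\Delta\bmod\pi))$. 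Using the Dedekind identity $\deg\mathrm{disc}(\clS/\clR)=\deg\euD_{\clS|\clR}$,
\[
\deg\euD_{\clS|\clR}\;=\;\deg\mathrm{disc}(B'/A).
\]

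\textbf{Step 3 (Serre's conductor formula).} For the partial normalisation $A\subseteq B'\subseteq B$, Serre's formula for the $\delta$-invariant of a $1$-dimensional local CM singularity gives
\[
\deg\mathrm{disc}(B'/A)\;=\;\deg\euD_{B|A}+2\delta.
\]
(Sanity check in the cuspidal model $A=k[[z^2]]\subset B'=k[[z^2,z^3]]\subset B=k[[z]]$, $p\neq 2$: the three terms read respectively $3$, $1$, $1$.) The general statement follows by comparing the trace pairings of $B/A$ and $B'/A$ via the conductor ideal.

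\textbf{Step 4 (conclusion).} Combining Steps~2 and~3 yields the master identity
\[
\deg\euD_{\clS|\clR}\;-\;\deg\euD_{B|A}\;=\;2\delta\;\geq\;0,
\]
with equality iff $\delta=0$, iff $\clB$ is smooth over $\euo$ (by Step~1). This proves (i)$\Longleftrightarrow$(ii), and the structural statement $\clB=\euo[[Z]]$, $B=\clB/\pi\clB=k[[z]]$ is Step~1.

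\textbf{Main obstacle.} The delicate ingredient is Step~3: it is the factor $2$ in Serre's conductor formula that makes the discrepancy nonnegative and thereby turns the numerical condition (ii) into the geometric condition (i). I would establish it by a direct trace-form computation comparing $A$-bases of $B'$ and $B$ through the conductor, as in Serre's treatment of $1$-dimensional singularities. The remaining steps are bookkeeping, once one has the $\clA$-freeness of $\clB$ (from Cohen--Macaulayness plus Auslander--Buchsbaum) and the Weierstrass preparation in $\euo[[T]]$ used to control $\Delta$.
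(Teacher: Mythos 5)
The paper does not actually prove Fact~\ref{KCR}: it is stated as a known result with pointers to Kato~\cite{Ka}, \S5, and Green--Matignon~\cite{GM1}, \S3.4, and is used as a black box in the proof of Key Lemma~\ref{keylemma1}. So there is no in-paper argument to compare against, and you should be credited with supplying a full proof.

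Your argument is correct, and it is in substance the standard proof underlying those references. The chain is: $\clB$ is Cohen--Macaulay (normal of dimension $2$, so $S_2=$CM) and hence $\clA$-free by Auslander--Buchsbaum; the discriminant ideal $\Delta=\mathrm{disc}(\clB/\clA)\subset\euo[[T]]$ base-changes to $\mathrm{disc}(\clS/\clR)$ on the generic fiber and to $\mathrm{disc}(B'/A)$ on the special fiber; since $\pi\clB$ is inert with separable residue extension, $v_\pi(\Delta)=0$, and Weierstrass preparation gives $\dim_\kk(\clR/\Delta\clR)=\dim_k\bigl(A/(\Delta\bmod\pi)\bigr)$, i.e.\ $\deg\euD_{\clS|\clR}=\deg\mathrm{disc}(B'/A)$ via Dedekind's norm--discriminant identity; finally the transition-matrix computation (Serre's $2\delta$ formula) gives $\deg\mathrm{disc}(B'/A)=\deg\euD_{B|A}+2\delta$, so $\deg\euD_{\clS|\clR}-\deg\euD_{B|A}=2\delta\geq0$, vanishing exactly when $B'=B$, which by flatness is exactly smoothness of $\clB$ over $\euo$; in that case lifting a uniformizer and applying completeness plus Nakayama gives $\clB=\euo[[Z]]$. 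The cusp example is a sound sanity check for the factor~$2$. One point worth making explicit rather than parenthetical: the step $v_\pi(\Delta)=0$, and indeed the very definition of $\euD_{B|A}$, require the residue extension $L|K$ to be separable. This is implicit in the statement (otherwise $\euD_{B|A}$ is not meaningful) and holds automatically in the paper's applications, where $L|K$ is a cyclic $p$-power extension, but it is the one hypothesis your proof genuinely uses beyond what is written.
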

\section{The characteristic $p$ Oort Conjecture}
\begin{remark/definition}
\label{essential}
In the context of section~2), A), let $L|K$ be a cyclic 
extension of degree $p^\nx:=[L:K]$ with upper ramification 
jumps $\ux_1\leq \dots \leq \ux_\nx$. Recall that setting 
$\ux_0=0$, one has that $p\hhb1\ux_{\alrh-1}\leq\ux_{\alrh}$
for all~$\alrh$ with $0<\alrh\leq\nx$, and the inequality 
is strict if and only if $\ux_{\alrh}$ is not divisible by $p$. 
The division by $p$ gives:
\[
\ux_\alrh-p\hhb1\ux_{\alrh-1}=
                   p\hhb1q_{\alrh}+\epsilon_{\alrh}
\]
with $0\leq q_{\alrh}$ and $0\leq\epsilon_{\alrh}< p$,
and notice that by the remark above one has: 
$0<\epsilon_{\alrh}$ if and only if $(p,\ux_{\alrh})=1$ 
if and only if $p\hhb1\ux_{\alrh-1} < \ux_{\alrh}$.
\vskip2pt
We call $q_{\alrh}$ the \defi{essential part} of the upper 
jump at $\alrh$, and if $0< q_{\alrh}$ we say that 
$\ux_\alrh$ is an \defi{essential upper jump} for $L|K$, 
and that $\alrh$ is an \defi{essential upper index} for 
$L|K$. 
\vskip4pt
We introduce terminology as follows: Let $\clR\hra\clS$ 
be any generically finite Galois extension of Dedekind 
rings with cyclic inertia groups, and 
$\clK:={\rm Quot}(\clR)\hra{\rm Quot}(\clS)=:\clL$
be the corresponding cyclic extension of their quotient 
fields. For a maximal ideal  $\euq\in\Spec\clS$ above 
$\eup\in\Spec\clR$, let $\clK_\eup\hra\clL_\euq$ be the 
corresponding extension of complete discrete valued 
fields. We we will say that $\clR\hra\clS$ has \defi{(no) 
essential ramification jumps} at $\eup$, if the $p$-part 
of the cyclic extension of discrete complete valued fields 
$\clK_\eup\hra\clL_\euq$ has (no) essential upper 
ramification jumps. And we say that $\clR\hra\clS$ is has 
no essential ramification, if $\clR\hra\clS$ is has no 
essential ramification jumps at any $\eup\in\Spec\clR$.
\end{remark/definition}

In the remaining part of this subsection, we will work in
a special case of the situation presented in section~2,~C),
which is as follows: We consider a fixed algebraically 
closed field~$k$ with ${\rm char}(k)=p>0$, let 
$\euo=k[[\upi]]$ be the 
power series ring in the variable $\upi$ over $k$, thus
$\kk=k\lps\upi={\rm Quot}(\euo)$ is the Laurent power 
series in the variable $\upi$ over $k$. Let $\clA=k[[\upi,\TT]]$ 
and $\clK=k\lps{\upi,\TT}={\rm Quot}(\clA)$ be its field
of fractions. Then $A=\clA/(\upi)=k[[t]]$ 
and $K=k\lps t={\rm Quot}(A)$ is the fraction 
field of $A$. Further, $\clR:=\clR\otimes_\euo\kk
=\kk\langle\hhb{-1}\langle t\rangle\hhb{-1}\rangle$
is the ring of power series in $t$ over $\kk$ having 
$v_\kk$-bounded coefficients. Thus $\clR$ is a Dedekind 
ring having $\Spec(\clR)$ in bijection with the points
of the open rigid disc $\euX=\Spf\,\clR$ of radius~$1$ 
over the complete valued field~$\kk$. And we notice 
that $\euX=\Spec(\clR)$ is precisely the complement 
of $V(\upi)\subset\Spec(\clA)$. Finally, for a finite 
separable field extension $\clK\hra\clL$, we let 
$\clB\subset\clS$ be the integral closures of 
$\clA\subset\clR$ in the finite field extension $\clK\hra\clL$.
Thus $\clB$ is finite $\clA$-module, and $\clS$ is a finite
$\clR$-module, in particular a Dedekind ring. 
\begin{keylemma} 
\label{keylemma1}
{\rm (Characteristic $p$ local Oort conjecture)} \ \ 
In the above notations, let $N:=1+q_1+\dots+q_\nx$
and $x_1,\dots,x_N\in\eum_\euo$ be distinct points.
Let $K\hra L$ be a cyclic $\lvZ/p^\nx$-extension with 
upper ramification jumps $\ux_1\leq\dots\leq \ux_\nx$. 
Then there exists a cyclic $\lvZ/p^\nx$-extension 
$\clK\hra\clL$ such that the integral closure $\clA\hra\clB$ 
of $\clA$ in $\clK\hra\clL$ and the corresponding 
extension of Dedekind rings $\clR\hra\clS$ satisfy:
\vskip3pt
{\rm 1)} The morphism $\varphi:\Spec\clB\to\Spec\euo$ 
is smooth. In particular, $\clB=k[[\upi,Z]]$ and the 
special fiber of $\varphi$ is $\Spec B\to k$, where 
$B=k[[z]]=\clB/(\upi)$ and $z=Z\,\big({\rm mod}\,(\upi)\big)$.
\vskip3pt
{\rm 2)} The canonical morphism $\clR\hra\clS$ has 
no essential ramification and is ramified only at points 
$y_\mu\in\Spec\clS$ above the points $x_\mu\in\Spec\clR$, 
$1\leq\mu\leq N$.
\vskip3pt
{\rm3)}  Let $(\ux_{\mu,\alrh})_{1\leq\alrh\leq\nx_\mu}$ 
be the upper ramification jumps at each $y_\mu\mapsto x_\mu$.
Then $(\nx_\mu)_{1\leq\mu\leq N}$ is decreasing, and the
upper jumps are given by:
\begin{itemize}
\vskip2pt
\item[{\rm i)}] $\ux_{1,\rhal}=p\hhb1\ux_{1,\rhal-1}+\epsilon_{\rhal}$
for $\,1\leq\rhal\leq\nx$.
\vskip1pt\noindent \ \
\item[{\rm ii)}] $\ux_{\mu,\rhal}=p\hhb1\ux_{\mu,\rhal-1}+p-1$
for $1 < \mu\leq N$ and $1\leq\alrh\leq \nx_\mu$.
\end{itemize}

\vskip2pt
{\rm4)} In particular, the branch locus $\,\{x_1,\dots,x_N\}$ 
of $\,\clR\hra\clS$ is independent of $k[[t]]\hra k[[z]]$, and 
the upper ramification jumps 
$\uix_\mu:=(\ux_{\mu,1},\dots,\ux_{\mu,\nx_\mu})$ at 
each $y_\mu\mapsto x_\mu$, $1\leq\mu\leq N\!$, depend 
only on the upper jumps 
$\uix:=(\ix_1,\dots,\ix_\nx)$ of $k[[t]]\hra k[[z]]$.
\end{keylemma}
The proof of the Key Lemma~\ref{keylemma1} will 
take almost the whole section. We begin by recalling that 
in the notations from section~2,~B), there exists 
${\undr p}=\big(p_1(\tmu) ,\dots,p_\nx(\tmu) \big)$, say 
in standard form, such that $L=K_{\undr p}$. The integral
closure $A\hra B$ of $A=k[[t]]$ in the field extension
$K\hra L$ is of the form $B=k[[z]]$ for any uniformizing
parameter $z$ of $L={\rm Quot}(B)$. And the degree of the 
different $\euD_{L|K}:=\euD_{B|A}$ is $\,\deg(\euD_{L|K})=
\sum_{\alrh=1}^\nx(\ux_\alrh+1)(p^{\hhb1\alrh}-p^{\alrh-1})$.
%
%
\vskip7pt
\noindent
A) {\it Combinatorics of the upper jumps\/}
\vskip7pt
\vskip2pt
In the above context, let $\nx_0$ be the number 
of essential upper jumps, which could be zero. If 
there exist essential upper jumps, i.e., $0 < \nx_0$, 
let $r_1\leq\dots\leq r_{\nx_0}$ be the essential upper 
indices for $L|K$, and notice that the sequence 
$(r_i)_{1\leq i\leq\nx_0}$ is strictly increasing with 
$r_{\nx_0}\leq \nx$. For technical reasons (to simplify
notations) we set $r_{\nx_0+1}:=\nx+1$, and if we 
need to speak about $r_{\nx_0+1}$, we call
it the \defi{improper upper index}, which for $\nx_0=0$
would become $r_1=e+1$.
\vskip2pt
We next construct a finite strictly increasing sequence 
$(d_i)_{0\leq i\leq\nx_0}$ as follows: We set $d_0=1$,
and we are done if $\nx_0=0$. If $\nx_0>0$, we define 
inductively $d_i:=d_{i-1}+q_{r_i}$ for $1\leq i\leq\nx_0$. 
In particular, we see that $N:=1+q_1+\dots q_\nx$ is 
$N=1$ if $\nx_0=0$, and $N:=d_{\nx_0}$ otherwise.
\vskip2pt
We define an $N\times\nx$ matrix of non-negative integers 
$(\tht_{\mu,\rhal})_{1\leq\mu\leq N,\,1\leq\rhal\leq\nx}$ 
as follows: 
\vskip5pt
$\bullet$ If $\nx_0=0$, then $N=1$, and the $1\times\nx$ 
matrix is given by $\tht_{1,\rhal}:=u_\rhal$, $1\leq\rhal\leq\nx$.
\vskip2pt
$\bullet$ If $\nx_0>0$, thus $N>1$, we 
define:\footnote{Here and elsewhere, we set 
$\tht_{\mu,0}:=0$ as well as $\ux_0=0$ and $\ux_{\mu,0}=0$.}
\begin{itemize}
\item[{\rm a)}] $\tht_{1,\rhal}=p\hhb1\tht_{1,\rhal-1}+\epsilon_{\rhal}$
for $\,1\leq\rhal\leq\nx$.
\vskip5pt\noindent \ \
\item[{\rm b)}] For $i=1,\dots,\nx_0$ \ and \ $d_{i-1}<\mu\leq d_i$ define:
\vskip2pt\noindent
$\scriptstyle\bullet$ \ $\tht_{\mu,\rhal}=0$ for $1\leq\rhal < r_i$.
\vskip2pt\noindent
$\scriptstyle\bullet$ \ $\tht_{\mu,\rhal}=p\hhb1\tht_{\mu,\rhal-1}+p-1$ 
for $r_i\leq\rhal\leq\nx$.
\end{itemize}

\vskip3pt
Notice that in the case $\nx_0>0$, one has: Let
$\rhal$ with $1\leq\rhal\leq\nx$ be given. Consider the
unique $1\leq i\leq\nx_0$ such that $r_i\leq\rhal< r_{i+1}$.
(Recall the if $r_i=\nx$, then $r_{i+1}:=\nx+1$ by
the convention above!) Then for all $\mu$ with 
$1\leq\mu\leq N$ one has: $\tht_{\mu,\rhal}\neq0$ 
if and only if $\mu\leq d_i$. 
The fundamental combinatorial property of
$(\tht_{\mu,\rhal})_{1\leq\mu\leq N,\,1\leq\rhal\leq\nx}$ 
is given by the following:  
\begin{lemma} 
\label{combinlemma}
For $1\leq i\leq \nx_0$ and $r_i\leq\rhal< r_{i+1}$ one has: 
$\ux_\rhal+1=\sum_{1\leq\mu\leq d_i}(\tht_{\mu,\rhal}+1)$.
\end{lemma}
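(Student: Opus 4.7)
My plan is to prove the identity by induction on $\rhal \geq 0$, working with the slightly strengthened statement that the formula holds for \emph{all} $0 \leq \rhal \leq \nx$ under the convention that for $\rhal < r_1$ one sets $i = 0$, $d_0 = 1$, so the claim reads $\ux_\rhal + 1 = \tht_{1,\rhal} + 1$. The base case $\rhal = 0$ follows directly from $\tht_{1,0} = \ux_0 = 0$. The computational engine for the step is the pair of identities obtained by rewriting the defining recursions a) and b): every active row $\mu \geq 2$ with $\rhal > r_i$ (and $\mu \leq d_i$) satisfies $\tht_{\mu,\rhal} + 1 = p(\tht_{\mu,\rhal-1}+1)$ \emph{exactly}, while row $\mu = 1$ obeys $\tht_{1,\rhal} + 1 = p(\tht_{1,\rhal-1}+1) + (\epsilon_{\rhal} + 1 - p)$.

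For the inductive step I split into two cases. \textbf{Non-essential step} ($r_i < \rhal < r_{i+1}$, so $q_{\rhal} = 0$): here the set of active rows is unchanged from index $\rhal - 1$, so summing the two identities above and substituting the inductive hypothesis yields $\sum_{\mu=1}^{d_i}(\tht_{\mu,\rhal}+1) = p(\ux_{\rhal-1}+1) + \epsilon_{\rhal} + 1 - p = p\ux_{\rhal-1} + \epsilon_{\rhal} + 1 = \ux_\rhal + 1$, using $q_{\rhal} = 0$ at the last equality. \textbf{Essential step} ($\rhal = r_i$): exactly $q_{r_i} = d_i - d_{i-1}$ new rows become active, each contributing $\tht_{\mu, r_i} + 1 = p$; the old rows $\mu = 1, \dots, d_{i-1}$ evolve as in the non-essential case, contributing altogether $p\ux_{r_i-1} + \epsilon_{r_i} + 1$ by the inductive hypothesis at $r_i - 1$. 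The grand total is $p\ux_{r_i-1} + pq_{r_i} + \epsilon_{r_i} + 1 = \ux_{r_i} + 1$ by the defining relation $\ux_{r_i} - p\ux_{r_i-1} = pq_{r_i} + \epsilon_{r_i}$.

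The only real obstacle is organizational: keeping track of which rows are active at each $\rhal$ and handling the initial transition $d_0 = 1 \to d_1 = 1 + q_{r_1}$ uniformly with the later ones. Phrasing the induction in the strengthened form above (valid also for $\rhal < r_1$) makes every essential transition look alike and avoids boundary-case ugliness; after that, the verification is just the bookkeeping sketched above.
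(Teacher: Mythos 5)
Your proof is correct and takes essentially the same approach as the paper: induction on $\rhal$, with the two cases being a non-essential index (the paper's Case~1, $\rhal+1<r_{i+1}$) and an essential index (the paper's Case~2, $\rhal+1=r_{i+1}$), driven by the same recursion rewrites $\tht_{\mu,\rhal}+1=p(\tht_{\mu,\rhal-1}+1)$ for active rows $\mu>1$ and $\tht_{1,\rhal}+1=p(\tht_{1,\rhal-1}+1)+(\epsilon_\rhal+1-p)$ for the first row. Your strengthened form (extending to $\rhal<r_1$ with $i=0$, $d_0=1$) is a clean way to absorb the base case; the paper instead handles $\rhal=1$ by an ad hoc split on $r_1>1$ versus $r_1=1$, so your version is a minor tidying of the same argument.
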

\begin{proof}
The proof follows by induction on $\rhal=1,\dots,\nx$. 
Indeed, if $\nx_0=0$, then $N=1$, and there is nothing
to prove. Thus supposing that $\nx_0>0$, one argues as
follows: 
\vskip2pt
$\bullet$ The assertion holds for $\rhal=1$: First, if $r_1>1$, 
then $\tht_{\mu,1}=0$ for $1<\mu$, thus there is nothing to
prove. Second, if $r_1=1$, then $q_1>0$ and $d_1=1+q_1$.
Further, by the definitions one has: $\tht_{1,1}=\epsilon_1$
and $\tht_{\mu,1}=p-1$ for $1<\mu\leq d_1$, and conclude
by the fact that $u_1=p\hhb1q_1+\epsilon_1$.
\vskip2pt
$\bullet$ If the assertion of Lemma~\ref{combinlemma} 
holds for $\rhal<\nx$, the assertion also holds for $\rhal+1$: 
Indeed, let $i$ be such that $r_i\leq\rhal< r_{i+1}$. 
\vskip3pt
\underbar{Case 1}: $\rhal+1< r_{i+1}$. \ Then 
$r_i\leq\rhal+1<r_{i+1}$, and in particular, $\rhal+1$ is 
not an essential jump index. Hence by definitions one has 
that $\ux_{\rhal+1}=p\hhb1\ux_\rhal+\epsilon_{\rhal+1}$ 
with $0\leq\epsilon_{\rhal+1}< p$. On the other hand, by 
the induction hypothesis we have that
$\ux_\rhal=\tht_{1,\rhal}+\sum_{1<\mu\leq d_i}(\tht_{\mu,\rhal}+1)$.
Hence taking into account the definitions of $\tht_{\mu,\rhal}$, 
we conclude the proof in Case~1 as follows:
\begin{eqnarray*} 
\ux_{\rhal+1}+1
    &=&p\hhb1\ux_\rhal+\epsilon_{\rhal+1}+1\\
   &=&p\hhb1\tht_{1,\rhal}+
              \textstyle\sum_{1<\mu\leq d_i}
                  (\hhb1p\hhb1\tht_{\mu,\rhal}+p)+\epsilon_{\rhal+1}+1\\
   &=&(p\hhb1\tht_{1,\rhal+1}+\epsilon_{\rhal+1}+1)+
             \textstyle\sum_{1 < \mu\leq d_i}
              \big( (\hhb1p\hhb1\tht_{\mu,\rhal}+p-1)+1\big)\\
    &=&(\ux_{1,\rhal+1}+1)+ \textstyle\sum_{1 < \mu\leq d_i}
              (\tht_{\mu,\rhal+1}+1)\\
    &=&\textstyle\sum_{1\leq\mu\leq d_i}
              (\tht_{\mu,\rhal+1}+1).
\end{eqnarray*}

\underbar{Case~2}: $\rhal +1 = r_{i+1}$. Then $\rhal+1$ 
is an essential jump index, thus by definitions one has:
$\ux_{\rhal+1}=p\hhb1\ux_\rhal+p\hhb1q_{\rhal +1}+\epsilon_{\rhal+1}$
with $0< q_{\rhal+1}$ and $0 < \epsilon_{\rhal+1} < p$, 
$d_{i+1}=d_i+q_{\rhal+1}$, $r_{i+1}\leq\rhal +1< r_{i+2}$.
On the other hand, by the induction hypothesis one has 
$\ux_\rhal=\tht_{1,\rhal}+
                  \sum_{1< \mu\leq d_i}(\tht_{\mu,\rhal}+1)$.
Therefore, using the definitions of $\tht_{\mu,\rhal}$ we get:
\begin{eqnarray*} 
\ux_{\rhal+1} +1
    &=&p\hhb1\ux_\rhal+p\hhb1q_{\rhal +1}+\epsilon_{\rhal+1}+1\\
   &=&(p\hhb1\tht_{1,\rhal}+\epsilon_{\rhal+1}+1)+
              \textstyle\sum_{1<\mu\leq d_i}
                  (\hhb1p\hhb1\tht_{\mu,\rhal}+p)
                        +p\hhb1q_{\rhal +1}\\
   &=&(\ux_{1,\rhal+1}+1)+
             \textstyle\sum_{1 < \mu\leq d_i}
              \big( (\hhb1p\hhb1\tht_{\mu,\rhal}+p-1)+1\big)+
 \textstyle\sum_{d_i < \mu\leq d_{i+1}}\big((p-1)+1\big)\\
    &=&(\ux_{1,\rhal+1}+1)+ \textstyle\sum_{1 < \mu\leq d_{i+1}}
              (\tht_{\mu,\rhal+1}+1)\\
    &=&\textstyle\sum_{1\leq\mu\leq d_{i+1}}
              (\tht_{\mu,\rhal+1}+1).
\end{eqnarray*}

This completes the proof of Lemma~\ref{combinlemma}.
\end{proof}

\vskip5pt
\noindent
B) {\it Generic liftings\/}
\vskip7pt
Let $k[[t]]\hra k[[z]]$ be a $\lvZ/p^\nx$-cyclic extension
with upper ramification jumps $\ux_1\leq\dots\leq\ux_\nx$. 
Recall that setting $K=k\lps t$ and $L=k\lps z$, in the 
notations introduced in section~2,~B), there exists 
${\undr p}=\big(p_1(\tmu) ,\dots,p_\nx(\tmu) \big)$ such that
$L=K_{\undr p}$, where ${\undr p}$ is in standard form, i.e., 
either $p_\alrh(\tmu)=0$ or it contains no non-zero terms in 
which the exponent of $\tmu$ is divisible by~$p$. And by the 
discussion in~section~2,~B), one has that
\[
\ux_\alrh=\max\,\{\hhb2p\hhb1\ux_{\alrh-1},\dgr{p_\alrh(\tmu)}\},
\quad\alrh=1,\dots,\nx.
\]
\begin{fact}
\label{fact2}
Let ${\undr c}:=\big(h_1(\tmu),\dots,h_\nx(\tmu)\big)$ be an 
arbitrary Witt vector with coordinates in~$k[\tmu]$. 
For a fixed polynomial $h(\tmu)\in k[\tmu]$, let ${\undr c}_{h,i}$ 
be the Witt vector whose $i^{\rm th}$ coordinate 
is $h(\tmu)$, and all the other coordinates are equal 
to~$0\in k[\tmu]$. Then $\tilde {\undr c}:={\undr c}+{\undr c}_{h,i}$ 
has coordinates $\tilde{\undr c}=\big(\tilde h_1(\tmu),\dots,
\tilde h_\nx(\tmu)\big)$ satisfying the following:
\begin{itemize}
\item[a)] $\tilde h_j(\tmu)=h_j(\tmu)$ for $j < i$.
\vskip2pt
\item[b)] $\tilde h_i(\tmu)=h_i(\tmu)+h(\tmu)$.
\vskip2pt
\item[c)] $\dgr{\tilde h_j(\tmu)}\leq
      \max\,\{\hhb2\dgr{h_j(\tmu)},\,p^{j-i}\dgr{h(\tmu)}\}$
      for all $i < j$.
\end{itemize}      
\end{fact}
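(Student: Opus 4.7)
The strategy is to analyse the universal polynomials $s_j(\mathbf{a},\mathbf{b})$ defining Witt vector addition and to track $\tmu$-degrees through the substitution. I use three standard features of the $s_j$: \emph{(i)} $s_j$ depends only on $a_1,\ldots,a_j,b_1,\ldots,b_j$; \emph{(ii)} over the integers $s_j$ is weighted-homogeneous of weight $p^{j-1}$ for the grading $w(a_k)=w(b_k)=p^{k-1}$; and \emph{(iii)} $s_j(\mathbf{a},0)=a_j$ and $s_j(0,\mathbf{b})=b_j$, reflecting that the zero Witt vector is neutral. Since $\undr c_{h,i}$ has only $b_i=h$ nonzero, the whole computation reduces to substituting $b_\ell=0$ for $\ell\neq i$ into $s_j(h_1,\ldots,h_j,b_1,\ldots,b_j)$.

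Parts (a) and (b) fall out directly. For (a) with $j<i$, every $b_\ell$ with $\ell\le j$ vanishes, hence by \emph{(iii)} we have $\tilde h_j=s_j(h_1,\ldots,h_j,0,\ldots,0)=h_j$. For (b), write $s_i=a_i+b_i+P_i$ with $P_i$ a polynomial in $a_1,\ldots,a_{i-1},b_1,\ldots,b_{i-1}$. The identities in \emph{(iii)} force every monomial of $P_i$ to contain at least one $a$-factor and at least one $b$-factor; in particular some $b_\ell$ with $\ell<i$, which vanishes in our substitution. Hence $\tilde h_i=h_i+h$.

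Part (c) is the substantive case ($j>i$). Expanding $s_j$ as a $k$-linear combination of monomials $\prod_k a_k^{e_k}\prod_\ell b_\ell^{f_\ell}$ obeying $\sum_k e_k p^{k-1}+\sum_\ell f_\ell p^{\ell-1}=p^{j-1}$ by \emph{(ii)}, only monomials with $f_\ell=0$ for $\ell\neq i$ survive; each contributes a term $\prod_k h_k^{e_k}\cdot h^{f_i}$ of $\tmu$-degree $\sum_k e_k\dgr{h_k}+f_i\dgr{h}$, subject to $\sum_k e_k p^{k-1}+f_i p^{i-1}=p^{j-1}$. The two extreme surviving monomials are $a_j$ (giving $h_j$ of degree $\dgr{h_j}$) and $b_i^{p^{j-i}}$ (giving $h^{p^{j-i}}$ of degree $p^{j-i}\dgr{h}$); these realise the two terms in the claimed bound. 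The main technical hurdle is to verify that no mixed monomial with both $f_i\ge 1$ and some $e_k\ge 1$ can exceed $\max\{\dgr{h_j},\,p^{j-i}\dgr{h}\}$. I would attempt this either by induction on $j-i$ via the Witt recursion expressing $s_j$ from the lower $s_{j'}$, or by a direct linear-programming bound on the weighted exponent lattice, using convexity of the constraint polytope to show that any mixed contribution is dominated by one of the two extreme vertex contributions $a_j$ and $b_i^{p^{j-i}}$.
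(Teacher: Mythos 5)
The paper states Fact~\ref{fact2} without proof, so there is no proof to compare against; I will assess your argument on its own merits. Parts (a) and (b) are handled correctly: your observations (i) and (iii) about the sum polynomials $s_j$ are standard, and they immediately give $\tilde h_j = h_j$ for $j<i$ and $\tilde h_i = h_i + h$, exactly as you say.

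For part (c) there are two substantive problems. First, your claim that $b_i^{\hhb1p^{j-i}}$ is a ``surviving monomial'' of $s_j$ contradicts your own property (iii): since $s_j(0,\mathbf b)=b_j$, the only $b$-pure monomial occurring in $s_j$ is $b_j$ itself, not $b_i^{\hhb1p^{j-i}}$. The term $p^{j-i}\hhb1\dgr h$ in the asserted bound does not arise from a single monomial; it can only come from an estimate of the $b_i$-part of mixed monomials. Second, and more seriously, the ``main technical hurdle'' you leave open cannot be closed, because the inequality in (c) is not true for an \emph{arbitrary} Witt vector $\undr c$. Take $\nx=2$, $i=1$, $j=2$, $p=2$: then $s_2(\mathbf a,\mathbf b)=a_2+b_2-a_1b_1$, so with $h_1=\tmu^{100}$, $h_2=\tmu$ and $h=\tmu$ one gets $\tilde h_2 = \tmu - \tmu^{101}$, of degree $101$, whereas $\max\{\dgr{h_2},\,p\hhb1\dgr h\}=2$. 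The mixed monomial $a_1^{p-1}b_1$ (coefficient $\equiv -1\bmod p$, hence nonzero) is exactly the term your linear-programming/convexity step would have to rule out, and it cannot be ruled out: $a_j$ is not even a vertex of the polytope of mixed exponent vectors $\{f_i\ge1,\ \sum e_k p^{k-1}+f_ip^{i-1}=p^{j-1}\}$, whose genuine vertices include things like $a_1^{\hhb1p^{j-1}-p^{i-1}}b_i$ with degree $(p^{j-1}-p^{i-1})\dgr{h_1}+\dgr h$, which is unbounded in terms of $\dgr{h_j}$ and $\dgr h$.

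What the weighted-homogeneity argument (your setup, carried out honestly) actually yields is
\[
\dgr{\tilde h_j}\ \le\ \max\Bigl\{\,\dgr{h_j},\ p^{j-i}\dgr{h},\ \max_{1\le k<j}p^{j-k}\dgr{h_k}\,\Bigr\},
\]
obtained by writing $\sum_k e_k\dgr{h_k}+f_i\dgr h = \sum_k e_kp^{k-1}\cdot\frac{\dgr{h_k}}{p^{k-1}}+f_ip^{i-1}\cdot\frac{\dgr h}{p^{i-1}}\le p^{j-1}\cdot\max\{\cdots\}$. The bound as printed in the Fact drops the terms $p^{j-k}\dgr{h_k}$ for $k<j$, which is only permissible under an additional hypothesis such as $\dgr{h_k}\le p^{\hhb1k-i}\dgr h$ for $1\le k<j$. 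That hypothesis does hold in the paper's only application of this Fact (Definition/Remark~\ref{normaliz},~3), where the $h_\alpha$ have $\dgr{h_\alpha}=\ix_{\alpha-1}$ and the $\ix_\alpha$ grow at least geometrically), so the Fact is true as used but not as literally stated; your write-up should make the extra degree hypothesis explicit and use the corrected bound above.
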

\vfill\eject
\begin{definition/remark} 
\label{normaliz}
$\hhb1$
\vskip2pt
1) In the above context, let 
${\undr q}:=\big(q_1(\tmu),\dots,q_\nx(\tmu)\big)$ with 
$q_\alrh(\tmu)\in k[\tmu]$ be some generator of $L|K$, i.e., 
$L=K_{\undr q}$. We say that ${\undr q}$ is \defi{normalized}
if $\ux_\alrh=\dgr{q_\alrh(\tmu)}$, $\alrh=1,\dots,\nx$. And
we say that ${\undr q}$ is separable, if each $q_\alrh(\tmu)$
is a separable polynomial (in $\tmu$).
\vskip2pt
2) We notice that if 
${\undr q}:=\big(q_1(\tmu),\dots,q_\nx(\tmu)\big)$
is some given Witt vector and $L:=K_{\undr q}$, then 
${\undr q}$ is normalized if and only if it satisfies: 
$\dgr{q_1(\tmu)}$ is prime to~$p$, and for all $1\leq\alrh < \nx$ 
one has that $p\hhb1|\hhb1\dgr{q_{\alrh+1}(\tmu)}$ 
implies $\dgr{q_{\alrh+1}(\tmu)}=p\,\dgr{q_\alrh(\tmu)}$.
\vskip2pt
3) Given a generator 
${\undr p}=\big(p_1(\tmu),\dots,p_\nx(\tmu)\big)$
in standard form for $L|K$, one can construct 
a separable normalized generator 
${\undr q}=\big(q_1(\tmu),\dots,q_\nx(\tmu)\big)$ 
as follows: Consider Witt vectors of the form 
${\undr c}:=\big(h_1(\tmu),\dots,h_\nx(\tmu)\big)$ 
with $h_\alrh(\tmu)\in k[\tmu]$ and 
$\dgr{h_\alrh(\tmu)}=\ux_{\alrh-1}$ for $1< \alrh\leq \nx$,
which are ``inductively generic'' with those properties.
Then setting 
\[
{\undr q}=:{\undr p}+\wp_\nx({\undr c})=:
                     \big(q_1(\tmu),\dots,q_\nx(\tmu)\big),
\]
it follows that $K_{\undr q}=L=K_{\undr p}$, thus ${\undr q}$ 
is a representative for ${\undr p}$ modulo $\wp_\nx(K)$. 
And applying inductively Fact~\ref{fact2} above, 
one gets: If $p\hhb1 \ux_{\alrh-1}<\ux_\alrh$, then 
$\dgr{h_\alrh(\tmu)}=\dgr{p_\alrh(\tmu)}=\ux_\alrh$. 
Second, if $p\hhb1 \ux_{\alrh-1}=\ux_\alrh$, then 
$\dgr{p_\alrh(\tmu)} < \ux_\alrh=p\hhb1\ux_{\alrh-1}$. Hence 
applying Fact~\ref{fact2} inductively, since $h_\alrh(\tmu)$ 
is generic of degree $\ux_{\alrh-1}$,  we get:
$\dgr{q_\alrh(\tmu)}=p\hhb1\dgr{h_{\alrh-1}(\tmu)}
            =p\hhb1\ux_{\alrh-1}=\ux_\alrh$, and
each $q_\alrh(\tmu)$ is separable.
\end{definition/remark}
Coming back to our general context, let
${\undr p}=\big(p_1(\tmu),\dots,p_\nx(\tmu)\big)$ 
be a {\it normalized generator\/} for $L|K$. Let 
$(\tht_{\mu,\rhal})_{1\leq\mu\leq N,\,1\leq\rhal\leq\nx}$
be the matrix of non-negative integers produced 
in the previous subsection~A). Since $k$ is algebraically 
closed, we can write each polynomial $p_\alrh(\tmu)$ 
as a product of polynomials $p_{\mu,\alrh}(\tmu)$ as follows: 
\[
p_\alrh(\tmu)=\textstyle{\prod}_{1\leq\mu\leq N}
             \,p_{\mu,\alrh}(\tmu),
\]
with $\dgr{p_{1,\alrh}(\tmu)}=\tht_{1,\alrh}$, 
$\dgr{p_{\mu,\alrh}(\tmu)}=\tht_{\mu,\alrh}+1$ for
$\mu >1, \tht_{\mu,\alrh}\neq0$,
$p_{\mu,\alrh}=1$ if $\tht_{\mu,\alrh}=0$.
\vskip5pt
For the given elements $x_\mu\in\upi\euo$, $\mu=1,\dots,N$
we set $\TT_\mu:=\TT-x_\mu\in\euo[\TT]$. And for a 
fixed choice $p_{\mu,\alrh}(\tmu)\in k[\tmu]$, we let 
$P_{\mu,\alrh}(\TTmu)\in\euo[\TTmu]$ 
be ``generic'' preimages with 
$\dgr{P_{\mu,\alrh}(\TTmu)}=
               \dgr{p_{\mu,\alrh}(\tmu)}$ for $\mu=1,\dots,N$
and $\alrh=1,\dots,\nx$. In particular, each
\[
P_\alrh:=\textstyle\prod_\mu\,P_{\mu,\alrh}(\TTmu)
 \in\euo[\TT^{-1}_{x_1},\dots,\TT^{-1}_{x_N}]\subset
    \clA_{x_1,\dots,x_N}, \quad \alrh=1,\dots,\nx
\]
is a linear combination of monomials in 
$(\TT-x_1)^{-1},\dots,(\TT-x_N)^{-1}$ with ``general'' 
coefficients from $\euo$ such that under the specialization 
homomorphism $\clA_{x_1,\dots,x_N}\to A[\tmu]$ they 
map to $p_\alrh=p_\alrh(\tmu)$. To indicate this, we will write 
for short 
\[
P_{\mu,\alrh}(\TTmu)\mapsto p_{\mu,\alrh}(\tmu),
\quad P_\alrh\mapsto p_\alrh(\tmu).
\]
\vskip4pt
We set ${\undr P}:=(P_1,\dots,P_\nx)$ and view it as a Witt vector
of length $\nx$ over $\clK$, and consider the corresponding
cyclic field extension $\clL:=\clK_{\undr P}$. Let $\clA\hra\clB$
be the normalization of $\clA$ in~$\clK\hra\clL$. Since
$\clA=k[[\upi,t]]$ is Noetherian and $\clK\hra\clL$ is separable,
it follows that $\clB$ is a finite $\clA$-algebra, thus Noetherian.
And since $\clA$ is local and complete, so is $\clB$.
\vskip4pt
We next have a closer look at the branching in the
finite ring extension $\clA\hra\clB$. For that we view 
$\clA\hra\clB$ as a finite morphism of $\euo:=k[[\upi]]$
algebras, and introduce geometric language as
follows: $\clX=\Spec\clA$ and $\clY=\Spec\clB$. 
Thus $\clA\hra\clB$ defines a finite $\euo$-morphism 
$\clY\to\clX$. Further let $\euY:=\Spec\clS\to\Spec\clR=:\euX$ 
and $Y:=\Spec\clB/(\upi)\to\Spec\clA/(\upi)=:X$ be
the generic fiber, respectively the special fiber
of $\clY\to\clX$. In particular, $X=\Spec A$ and $Y\to X$ 
is a finite morphism. We further mention the following 
general fact for later use:
\begin{fact}
\label{fact3}
Let $\clK\hra\clL$ be a cyclic extension of degree
$[\clL:\clK]=p^\nx$ with Galois group $G=\lvZ/p^\nx$,
say defined by some Witt vector 
${\undr a}=(a_1,\dots,a_\nx)\in\Wnx(\clK)$. 
For every $0\leq m\leq \nx$ let $\clK\hra\clL_m$ 
be the unique sub-extension of $\clK\hra\clL$ with 
$[\clL_m:\clK]=p^m$, hence $\clL_0=\clK$ and
$\clL_\nx=\clL$. Let $v$ be a discrete valuation of $\clK$, 
say with valuation ring $\clO_v\subset\clK$ and 
residue field $\clO_v\to\kp v$, $f\mapsto\oli f$, and 
let $T_v\subseteq Z_v\subseteq G$ be the inertia,
respectively decomposition, subgroups of $v$ in
$G$. Then for all~$m$ with $1\leq m\leq \nx$ the 
following hold:
\vskip2pt
\begin{itemize}
\item[1)] Let $v(a_1),\dots,v(a_m)\geq0$. Then 
$T_v\subseteq p^mG$, and $Z_v\subset p^mG$ 
iff $(\oli a_1,\dots,\oli a_m)\in{\rm im}(\wp_m)$.
\vskip3pt
\item[2)] If $v(a_m)$ is negative and prime to $p$, 
then $p^{m-1}G\subseteq T_v$.
\end{itemize}
\end{fact}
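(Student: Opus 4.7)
The plan is to pass first to the $v$-adic completion: replacing $\clK$ by $\clK_v$ and $\clL$ by its completion $\clL_w$ at a prolongation $w$ of $v$, the groups $T_v\subseteq Z_v\subseteq G$ become the inertia and Galois group of the local extension, while $\undr a$ still represents the extension in $\Wnx(\clK_v)$. I may therefore assume $\clK$ is $v$-complete throughout.

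For Part~1, the hypothesis $v(a_i)\geq 0$ for $i\leq m$ places the truncation $(a_1,\dots,a_m)$ in $W_m(\clO_v)$. Inductively each coordinate $x_i$ of a solution $\undr x$ of $\wp_m(\undr x)=(a_1,\dots,a_m)$ satisfies an equation $x_i^p-x_i=a_i-P_i(x_1,\dots,x_{i-1})$ which is monic over $\clO_v[x_1,\dots,x_{i-1}]$ with separable reduction modulo $\eum_v$ (the derivative being $-1$). Hence $\clO_v\hra\clO_v[\undr x]$ is \'etale, so $\clL_m|\clK$ is unramified at $v$ and $T_v\subseteq p^mG$. For the equivalence, $Z_v\subseteq p^mG$ is the same as $v$ splitting completely in $\clL_m|\clK$, i.e.\ $(a_1,\dots,a_m)\in\wp_m(W_m(\clK_v))$; iterated Hensel's lemma on the Artin--Schreier equations (valid by the separable-reduction point above) makes this equivalent to $(\oli a_1,\dots,\oli a_m)\in\wp_m(W_m(\kp v))$.

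For Part~2, observe that by the cyclicity of $G$, the inclusion $p^{m-1}G\subseteq T_v$ is equivalent to the degree-$p$ extension $\clL_m|\clL_{m-1}$ being ramified at any prolongation of $v$. I argue by induction on $m$. The base case $m=1$ is classical Artin--Schreier: $v(a_1)<0$ prime to $p$ forces total ramification of $\clL_1|\clK$, so $T_v=G$. In the inductive step, if some $j\leq m-1$ satisfies $v(a_j)<0$ prime to $p$, induction yields $T_v\supseteq p^{j-1}G\supseteq p^{m-1}G$. Otherwise, after possibly replacing $\clK_v$ by its maximal unramified completion (which preserves $T_v$ and makes the residue field algebraically closed), I modify $\undr a$ by images of $\wp_\nx$, iteratively absorbing each $p$-divisible negative pole in $a_j$ for $j<m$, and so reduce to the case $v(a_j)\geq 0$ for all $j<m$. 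Then Part~1 gives $\clL_{m-1}|\clK$ unramified at $v$; for a prolongation $w'$ of $v$ one has $e(w'|v)=1$, and integral ASW-solutions $x_j\in\clO_{w'}$ for $j<m$ exist. Hence $c:=a_m-P_m(x_1,\dots,x_{m-1})$ satisfies $w'(c)=v(a_m)<0$ prime to $p$ (since $w'(P_m(\dots))\geq 0$), and the Artin--Schreier extension $\clL_m=\clL_{m-1}(y)$ with $y^p-y=c$ is totally ramified at $w'$, giving $T_v\supseteq p^{m-1}G$.

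The hard part is the Witt-vector modification step in Part~2: each absorption of a $p$-divisible pole at level $j<m$ propagates corrections to the higher coordinates $a_{j+1},\dots,a_\nx$ via the Witt-vector arithmetic, and one must verify that these perturbations do not destroy the prime-to-$p$ structure of $v(a_m)$. This is controlled by the weighted-degree structure of the Witt polynomials defining $\wp_\nx$ together with the finiteness of the iteration (each absorption divides the $p$-adic size of the absorbed pole by $p$).
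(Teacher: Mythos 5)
The paper does not actually prove this statement: it is presented as a ``Fact'' (prefaced by ``We further mention the following general fact for later use''), a known consequence of the Artin--Schreier--Witt ramification theory surveyed in Section 2.B and attributed there to Kato, Garuti, Thomas, and Obus--Pries. So there is no internal proof to compare against; what follows is an assessment of your argument on its own terms.

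Your proof of Part~1 is correct and is the standard argument: integrality of the truncated Witt vector makes the ASW equations \'etale over $\clO_v$, hence $\clL_m|\clK$ unramified at $v$, and the decomposition criterion then follows from Hensel's lemma applied to the residual ASW equations.

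Your proof of Part~2 has genuine gaps, and the main one is exactly the step you yourself flag. Two distinct problems arise in the reduction. First, the maximal unramified extension $\clK_v^{\mathrm{nr}}$ has residue field $\kp v^{\mathrm{sep}}$, which is algebraically closed only if $\kp v$ was already perfect; in the applications in the paper the residue field is of the form $k\lps s$, which is not perfect, so the ``absorption of $p$-divisible poles'' may genuinely fail because the needed $p$-th root of the residual leading coefficient need not exist. Second, and more seriously, even over a perfect residue field the absorption at level $j<m$ feeds carry terms into $a_{j+1},\dots,a_m$ via the Witt addition polynomials, and these carries land at valuations which are in general prime to $p$ and can either dominate or coincidentally cancel the leading pole of $a_m$. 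A short computation (already with $m=\nx=2$ and $p=3$, taking $b_1$ with $v(b_1)<0$) shows that the second Witt coordinate of $\wp_2(b_1,b_2)$ picks up a contribution of valuation $v(b_1)(p^2-p+1)$, a prime-to-$p$ number when $v(b_1)$ is prime to $p$; so one \emph{cannot} conclude from $v(a_m)<0$ prime to $p$ alone that $\undr a\notin\wp_m(W_m(\clK_v^{\mathrm{nr}}))$ without invoking the hypothesis $\undr a\in W_\nx(\clK)$ and a Galois-descent/cocycle argument. Your closing paragraph asserts that the ``weighted-degree structure of the Witt polynomials'' controls this, but that control is the entire content of the lemma and is left unproved. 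The statement is better approached through the conductor formula of Section 2.B (where the highest upper jump equals $\max_\rho p^{\nx-\rho}\deg p_\rho$ for a standard-form representative), or directly through Kato's refined Swan conductor, rather than by hand-managing the carry terms.
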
  
\vskip2pt
We notice that since $\clA=k[[\upi,t]]$ is a two dimensional 
local regular ring, $\clX$ is a two dimensional regular 
scheme. Therefore, the branch locus of $\clY\to\clX$ is 
of pure co-dimension one. Thus in order to describe the
branching behavior of $\clY\to\clX$ one has to describe
the branching at the generic point $(\upi)$ of the special 
fiber $X\subset\clX$ of $\clX$, and at the closed points 
$x$ of the generic fiber $\euX\subset\clX$ of $\clX$.
%
\vskip5pt
\noindent
$\bullet$ {\it The branching at $(\upi)$\/}
\vskip4pt
We recall that $\clK\hra\clL$ is defined as a cyclic
extension by ${\undr P}:=(P_1,\dots,P_\nx)$, where 
each $P_\alrh$ is of the form $P_\alrh=\prod_\mu 
P_{\mu,\alrh}(\TTmu)$ with $P_{\mu,\alrh}
(\TTmu)\in\euo[\TTmu]\subset\clA_{x_1,\dots,x_N}$ 
is some generic preimage 
of $p_{\mu,\alrh}(\tmu)$ with degree satisfying  
$\dgr{P_{\mu,\alrh}(\TTmu)}=\dgr{p_{\mu,\alrh}(\tmu)}$.
In particular, the elements $P_{\mu,\alrh}(\TTmu)$
are not divisible by $\upi$ in the factorial ring 
$\clA_{x_1,\dots,x_N}$. Therefore, the elements 
$P_{\mu,\alrh}(\TTmu)$ are units $\clA_{(\upi)}$.
By Fact~\ref{fact3} we conclude that $\upi$ is not 
branched in $\clK\hra\clL$, and therefore, the 
special fiber $Y\to X$ of $\clY\to\clX$ is reduced. 
Moreover, since $P_1\mapsto p_1(\tmu)$, and 
the latter satisfies $p_1(\tmu)\not\in\wp(K)$, it 
follows by~Fact~\ref{fact3},~1), that ${\rm Gal}(\clL|\clK)$ 
is contained in the decomposition group of $v_{\upi}$.
In other words, $\upi$ is totally inert in 
$\clK\hra\clL$. In particular, $\clY\to\clX$ is \'etale 
above $\upi$, and moreover, the special fiber $Y\to X$ 
of $\clY\to\clX$ is reduced, irreducible, and generically
cyclic Galois of degree $p^\nx=[L:K]$.
\vskip5pt
\noindent
$\bullet$ {\it The branching at the points of the generic
fiber $x\in\euX$\/}
\vskip4pt
Recall that $\kk=k\lps\upi$ and that $\euX=\Spec\clR$, 
where $\clR=\clA\otimes_{k[[\upi]]}\hhb{-1}\kk$ is the 
ring of power series in $t$ with bounded coefficients 
from the complete discrete valued field $\kk$. 
[Thus $\euX$ is actually the rigid open unit disc 
over $\kk$.] If $x\in\euX$ is a closed point 
different from $x_1,\dots,x_N$, and $\clA_x$ 
is the local ring of $\euX$ at $x$, it follows 
that $P_1,\dots,P_\nx \in \clA_\eup$. Hence by 
Fact~\ref{fact3},~1), it follows that $x$ is not branched 
in $\clK\hra\clL$. Thus it is left to analyze the branching behavior of 
$\euY\to\euX$ at the closed points $x_1,\dots,x_N\in\euX$.
In this process we will also compute the total contribution 
of the ramification above $x_\mu$ to the total 
different~$\euD_{\clS|\clR}$ for $\mu=1,\dots,N$. 
\vskip3pt
Recall that every $x_\mu$ is a $\kk$ rational point 
of $\euX$, and $\TT_\mu:=\TT-x_\mu$ is the ``canonical'' 
uniformizing parameter at $x_\mu$. Thus 
$\clK_\mu:=\kk\lps{\TT_\mu}$ is the quotient field 
of the completion of the local ring at $x_\mu$, and 
we denote by $v_\mu:\clK_\mu^\times\to\lvZ$ the 
canonical valuation at $x_\mu$. We notice that 
$\TT_{\nu}=x_\mu-x_\nu +\TT_{\mu}$, hence by 
the ``genericity'' of $P_{\mu,\alrh}(\TTmu)$,
we can and \underbar{will} suppose that 
$P_{\nu,\alrh}(\TT^{-1}_{\nu})$ is a $v_\mu$-unit 
in $\clK_\mu$. We conclude that there exist 
$v_\mu$-units $\eta_1,\dots\eta_\nx \in \clK_\mu$
such that denoting by $\clL_\mu$ the compositum 
of $\clK_\mu$ and $\clL$, the cyclic extension 
$\clK_\mu\hra\clL_\mu$ is defined by the Witt vector:
\vskip3pt
\centerline{${\undr P}_\mu=\big(\eta_1P_{\mu,1}(\TTmu),
       \dots,\eta_\nx P_{\mu,\nx}(\TTmu)\big)$.}
\vskip5pt
\indent
\underbar{Case 1}: $\mu=1$. \ 
\vskip3pt
\noindent
First, by definitions we have 
$\dgr{P_{1,\alrh}(\TT^{-1}_{1})}=
               \dgr{p_{1,\alrh}(\tmu)}=\tht_{1,\alrh}$ 
for~all~$\alrh$. Second, by the definitions of 
$(\tht_{1,\alrh})_{1\leq\alrh\leq\nx}$ it follows that 
$\big(p_{1,1}(\tmu),\dots,p_{1,\nx}(\tmu)\big)$ is 
actually a normalized system of polynomials 
in $k[\tmu]$. Hence by Definition/Remark~\ref{normaliz},~3),
it follows that the upper ramification jumps of 
$\clK_1\hra\clL_1$ are precisely 
$\tht_{1,1}\leq\dots\leq\tht_{1,\nx}$, and in particular,
one has $[\clL_1:\clK_1]=p^\nx$.
\vskip5pt
\underbar{Case 2}: $1< \mu$, hence one has $0 < N_0$ as well.
\vskip3pt
\noindent
In the notations from the previous subsection~A), let 
$1\leq i \leq N_0$ maximal be such that $d_{i-1}<\mu$. Then 
by the definition of  $(\tht_{\mu,\alrh})_{1\leq\alrh\leq\nx}$ 
we have: $\tht_{\mu,\alrh}=0$ for $\alrh< r_i$, 
$\tht_{\mu,r_i}=p-1$, and $\tht_{\mu,\alrh}=
                                     p\hhb1\tht_{\mu,\alrh-1}+p-1$ 
for $r_i\leq\alrh\leq\nx$. Further, again by definitions,
one has $P_{\mu,\alrh}(\TTmu)=1$ for 
$\tht_{\mu,\alrh}=0$, i.e., for $\alrh < r_i$. And 
$P_{\mu,\alrh}(\TTmu)$ is generic of degree 
$\tht_{\mu,\alrh}+1$ for $r_i\leq\alrh\leq \nx$. Hence
the Witt vector 
${\undr P}_\mu:=\big(\eta_1P_{\mu,1}(\TTmu),
       \dots,\eta_\nx P_{\mu,\nx}(\TTmu)\big)$
satisfies the following conditions: 
\vskip2pt
- \ $\eta_\alrh P_{\mu,\rhal}(\TTmu)=\eta_\alrh$
for $1\leq\alrh< r_i$.
\vskip2pt
- \ $\eta_\alrh P_{\mu,\rhal}(\TTmu)$ is a 
generic polynomial of degree $\tht_{\mu,\alrh}+1$ 
for $r_i\leq\alrh\leq\nx$.
\vskip3pt
\noindent
In particular, by Fact~\ref{fact3}, one has that
$v_\mu$ is unramified in the sub-extension 
$\clK\hra\clL_{r_i-1}$ of degree $p^{r_i-1}$ of 
$\clK\hra\clL$. We claim that $\clL_{r_i-1}\hra\clL$ 
is actually totally ramified, or equivalently, that 
${\rm Gal}(\clL|\clL_{r_i-1})\subseteq {\rm Gal}(\clL|\clK)$
is the inertia group of $v_\mu$, hence the ramification
subgroup of $v_\mu$, because there is no tame
ramification involved. Indeed, it is sufficient to 
prove that this is the case after base changing 
everything to the maximal unramified extension 
$\clK_\mu\hra\tlclK_\mu$ of $\clK_\mu$. Recall that for 
$\rhal$ with $r_i\leq\alrh\leq \nx$ we have by definitions that 
$P_{\mu,\rhal}(\TTmu)\in\euo[\TTmu]$ 
is a generic polynomial in $\TTmu$ over 
$\euo=k[[\upi]]$ of degree $\tht_{\mu,\rhal}+1$. Hence
for $\rhal=r_i$ we have: $\tht_{\mu,r_1}+1$ is
divisible by $p$ and $P_{\mu,r_i}(\TTmu)$ 
is generic. But then it follows that the standard 
representative $Q_{\mu,r_i}(\TTmu)
\in\scl\kk[\TTmu]$ of $P_{\mu,\rhal}(\TTmu)$ 
modulo $\wp(\tlclK_\mu)$ has degree $\tht_{\mu,\rhal}$. 
Thus by Fact~\ref{fact3},~3), it follows that $v_\mu$ is totally 
ramified in the field extension $\tlclK_\mu\hra\tlclL_\mu$ 
and that $p^{\nx-r_i+1}=[\tlclL_\mu:\tlclK_\mu]$. Combining 
this with the fact that $v_\mu$ is unramified in 
$\clK\hra\clL_{r_i-1}$ and $p^{r_i-1}=[\clL_{r_i-1}:\clK]$, 
it follows that $\clK\hra\clL_{r_i-1}$ is the ramification 
field of $v_\mu$ in $\clK\hra\clL$. Equivalently, 
${\rm Gal}(\clL|\clL_{r_i-1})\subseteq{\rm Gal}(\clL|\clK)$ 
is the ramification subgroup of $v_\mu$.
\vskip2pt
We next compute the degree of the local different
of $\clR\hra\clS$ above $x_\mu$. Recall that
$\clK\hra\clL$ is defined by the Witt vector 
\[
{\undr P}_\mu=\big(\eta_1P_{\mu,1}(\TTmu),
       \dots,\eta_\nx P_{\mu,\nx}(\TTmu)\big).
\]
On the other hand, ${\undr P}_\mu$ is equivalent modulo
$\wp_\nx(\tlclK)$ to its standard form 
\[
{\undr P}'_\mu=\big(Q_{\mu,1}(\TTmu),
         \dots,Q_{\mu,\nx}(\TTmu)\big)
\]
with $Q_{\mu,\alrh}=0$ for $1\leq\alrh< r_i$
and $\dgr{Q_{\mu,\nx}(\TTmu)}=\tht_{\mu,\alrh}$
for $r_i\leq\alrh\leq \nx$. Hence setting 
\[
{\undr Q}_\mu=\big(Q_{\mu,r_i}(\TTmu),
        \dots,Q_{\mu,\nx}(\TTmu)\big),
\]
it follows that ${\undr Q}_\mu$ is a Witt vector of length 
$\nx-r_i+1$, and ${\undr Q}_\mu$ is in standard form, and 
the cyclic extension $\tlclK_\mu\hra\tlclL_\mu$ is defined
by ${\undr Q}_\mu$. Hence setting $\nx_\mu:=\nx-r_i+1$,
it follows that the cyclic field extension 
$\tlclK_\mu\hra\tlclL_\mu$ has degree $p^{\nx_\mu}$ 
and upper ramification jumps given by
\vskip9pt
\centerline{\hhb{20}$(*)$\hhb{60}
$\ux_{\mu,\alp}=\dgr{Q_{\mu,r_i+\alp}(\TTmu)}=
     \tht_{\mu,\hhb1r_i+\alp-1},\quad \alp=1,\dots, \nx_\mu$.\hhb{80}}
\vskip15pt
\noindent
\vfill\eject
\noindent
C) \  {\it Finishing the proof of Key Lemma~\ref{keylemma1}\/}
\vskip10pt
Let $\euD_\mu$ be the local part above $x_\mu$ of 
the global different $\euD_{\!\clS|\clR}$ of the 
extension of Dedekind rings $\clR\hra\clS$. Then 
if $\clK\hra\clL^Z\hra\clL^T\hra\clL$ are the 
decomposition/inertia subfields of $v_\mu$ in the
cyclic field extension $\clK\hra\clL$, by the functorial
behavior of the different, it follows that
\[
\deg(\euD_\mu)=[\kappa(x_\mu):\kk]\cdot[\clL^T:\clK]\cdot
     \deg(\euD_{\!\tlclL_\mu|\tlclK_\mu}).
\]
On the other hand, since $x_\mu\in\eum_\euo$, one
has $\kappa(x_\mu)=\kk$. Further, by the discussion
above one has that $\clL^T=\clL_{r_i-1}$, 
thus $p^{r_i-1}=[\clL^T:\clK]$. And
$\deg(\euD_{\!\tlclL_\mu|\tlclK_\mu})$ can be 
computed in terms of upper ramification jumps as 
indicated at the end of~section~1),~A):
\[
\deg(\euD_{\!\tlclL_\mu|\tlclK_\mu})=
    \textstyle\sum_{1\leq\alp\leq\nx_\mu}
          (\ux_{\mu,\alp}+1)(p^\alp-p^{\alp-1}).
\]
Hence taking into account the discussion above, we get:     
\begin{eqnarray*}
\deg(\euD_\mu)
  &=&[\kappa(x_\mu):\kk]\cdot[\clL^T:\clK]\cdot
     \deg(\euD_{\!\tlclL_\mu|\tlclK_\mu})\\
  &=&p^{r_i-1}\textstyle\sum_{1\leq\alp\leq\nx_\mu}
     (\ux_{\mu,\alp}+1)(p^\alp-p^{\alp-1})\\
  &=&\textstyle\sum_{1\leq\alp\leq\nx_\mu}
     (\tht_{\mu,r_i+\alp-1}+1)(p^{\alp+r_i-1}-p^{(\alp+r_i-1)-1})\\
  &=&\textstyle\sum_{r_i\leq\rhal\leq\nx}(\tht_{\mu,\rhal}+1)
          (p^{\rhal}-p^{\rhal-1})
\end{eqnarray*}
\vskip5pt
\noindent
Recall that $\ux_\rhal+1=\sum'_\mu(\tht_{\mu,\rhal}+1)$
for all $1\leq\alrh\leq \nx$, where $\sum'_\mu$ is taken 
over all $\mu$ with $\tht_{\mu,\rhal}\neq0$. Further, 
$\deg(\euD_1)=\sum_{1\leq\rhal\leq\nx}
                                  (\tht_{1,\rhal}+1)(p^\rhal-p^{\rhal-1})$ 
and $\deg(\euD_\mu)=\sum_{r_i\leq\rhal\leq\nx}
                               (\tht_{\mu,\rhal}+1)(p^\rhal-p^{\rhal-1})$ 
for all $1<\mu\leq N$. Therefore we get the following:
\begin{eqnarray*}
\deg(\euD_{\clS|\clR})
  &=&\micsm{1\leq\mu\leq N}{}\deg(\euD_\mu)\\
  &=&\micsm{1\leq\rhal\leq\nx}{}(\tht_{1,\rhal}+1)(p^\rhal-p^{\rhal-1})
   +\micsm{\ 1\leq i\leq N_0}{}\,\micsm{d_{i-1}<\mu\leq d_i \ }{}
    \micsm{r_i\leq\alrh\leq\nx}{}(\tht_{\mu,\rhal}+1)(p^\rhal-p^{\rhal-1})\\
  &=&\micsm{1\leq\rhal\leq\nx}{}(\tht_{1,\rhal}+1)(p^\rhal-p^{\rhal-1})
   +\micsm{\ 1\leq i\leq N_0}{}\,\micsm{r_i\leq\alrh< r_{i+1}}{}\,
    \micsm{d_0<\mu\leq d_i \ }{}(\tht_{\mu,\rhal}+1)(p^\rhal-p^{\rhal-1})\\
  &=&\micsm{\ 1\leq i\leq N_0}{}\,\micsm{\ r_i\leq\alrh< r_{i+1}}{}\,
    \micsm{1\leq\mu\leq d_i \ }{}(\tht_{\mu,\rhal}+1)(p^\rhal-p^{\rhal-1})\\
  &=&\micsm{\ 1\leq i\leq N_0}{}\,\micsm{\ r_i\leq\alrh< r_{i+1}}{}\,
    (\ux_\rhal+1)(p^\rhal-p^{\rhal-1})\\
  &=&\micsm{1\leq\alrh\leq\nx}{}(\ux_\rhal+1)(p^\rhal-p^{\rhal-1})\\
  &=&\deg(\euD_{L|K}).                
\end{eqnarray*}
We thus conclude the proof of Key Lemma~\ref{keylemma1}
by applying Kato's criterion~Fact~\ref{KCR}.
\vskip7pt
\noindent
D) {\it Characteristic $p$ global Oort Conjecture\/}
\begin{theorem}
\label{charpOC}
{\rm (Characteristic $p$ global Oort conjecture)} \ \ 
In the notations from the Key Lemma~\ref{keylemma1},
let $Y\to X$ be a (ramified) Galois cover of complete 
smooth $k$-curves having only cyclic groups as inertia 
groups, and set $\clX_\euo:=X\times_k\euo$. Then there 
exists a $G$-cover of complete smooth $\euo$-curves 
$\clY_\euo\to \clX_\euo$ with special fiber $\,Y\to X$ 
such that the generic fiber $\clY_\kk\to \clX_\kk$ of
$\clY_\euo\to \clX_\euo$ has no essential ramification.
\end{theorem}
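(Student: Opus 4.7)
The plan is to reduce the global statement to the local Key~Lemma~\ref{keylemma1} by formal patching, in the spirit of \nmnm{Garuti}~\cite{Ga1} and \nmnm{Saidi}~\cite{Sa}. Let $x_1,\dots,x_s\in X$ be the (finitely many) branch points of $Y\to X$; for each $i$ fix $y_i\in Y$ above $x_i$, with cyclic inertia group $I_i\cong\lvZ/m_ip^{e_i}$, $(m_i,p)=1$. The completed local extension $\whclO_{X,x_i}\hra\whclO_{Y,y_i}$ decomposes canonically as a tame $\lvZ/m_i$-extension followed by a wild cyclic $\lvZ/p^{e_i}$-extension of the form $k[[t_{x_i}]]\hra k[[z_{y_i}]]$, and these are the only data that need to be deformed non-trivially.

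First I would construct the local deformations. Apply Key~Lemma~\ref{keylemma1} to the wild part at each $x_i$ to obtain a smooth cyclic $\lvZ/p^{e_i}$-extension $\clA_i\hra\clB_i$ over $\euo=k[[\upi]]$, with special fiber $k[[t_{x_i}]]\hra k[[z_{y_i}]]$ and whose generic fiber has no essential upper ramification jumps (the wild branching being spread over finitely many prescribed points in the rigid open disc above $x_i$). The tame $\lvZ/m_i$-part is lifted in the essentially unique way, for instance by Kummer theory, equivalently by Grothendieck's specialization theorem for the tame fundamental group. Composing the two pieces one gets a smooth cyclic $I_i$-cover over the formal neighborhood of the section $\sigma_i$ of $\clX_\euo\to\Spec\euo$ through $x_i$, which we induce up from $I_i$ to $G$ to obtain a smooth local $G$-cover with the prescribed special fiber and no essential ramification on its generic fiber.

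Finally I would patch globally. Since $Y\to X$ is \'etale away from $\{x_1,\dots,x_s\}$, by Grothendieck's specialization/existence theorem it lifts uniquely $G$-equivariantly to an \'etale $G$-cover of $\clX_\euo\setminus\{\sigma_1,\dots,\sigma_s\}$ with the prescribed special fiber. On each punctured formal neighborhood of $\sigma_i$, this \'etale lift and the one induced up from $\clA_i\hra\clB_i$ are \'etale $G$-covers of a regular integral scheme with identical special fiber, hence are $G$-equivariantly isomorphic in a unique way; this supplies the compatibility required to patch. Formal patching in the Harbater style, as used in the present context by \nmnm{Garuti}~\cite{Ga1} and systematized in \nmnm{Saidi}~\cite{Sa}, then glues these data into a global smooth $G$-cover $\clY_\euo\to\clX_\euo$ whose generic fiber has no essential ramification, since essential ramification is a local invariant at each branch point and each local piece was built to be free of essential upper jumps. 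The main obstacle --- producing local smooth deformations over $\euo$ with generic fiber free of essential upper jumps --- is precisely what Key~Lemma~\ref{keylemma1} overcomes; the patching step itself is, in this setting, standard.
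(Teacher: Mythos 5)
Your proposal is essentially the paper's own proof, just with the local-global principle unpacked: the paper cites Garuti and Saidi for the patching reduction to the local problem, reduces to cyclic $p$-groups exactly as you do (tame part lifts canonically; essential ramification is a $p$-phenomenon), and then invokes Key Lemma~\ref{keylemma1}. Your expansion of the patching step and the tame/wild decomposition is a correct elaboration of the same route, not a different argument.
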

\begin{proof}
First, as in the case of the classical Oort Conjecture,
the local-global principle for lifting (ramified) Galois 
covers, see \nmnm{Garuti}~\cite{Ga1},~\S3, as well 
as \nmnm{Saidi}~\cite{Sa},~\S1.2, where the proofs 
of Propositions~1.2.2 and~1.2.4 are very detailed, 
reduces the proof of the Theorem~\ref{charpOC} to the 
corresponding local problem over $\euo$. Further,
exactly as in the case of the classical local Oort
Conjecture, the local problem is equivalent to the case 
where the inertia groups are cyclic $p$-groups.  One 
concludes by applying the~Key~Lemma~\ref{keylemma1}. 
\end{proof}
%
%
%
\section{Proof of Theorem~\ref{OC}}
\noindent
A) \ {\it Generalities about covers of $\,\lvP^1$\/}
\vskip2pt
\begin{notations}
\label{notanota}
We begin by introducing notations concerning 
families of covers of curves which will be used
throughout this section. Let $S$ be a separated,
integral normal scheme, e.g., $S=\Spec A$ with 
$A$ and integrally closed domain, and 
$\kbm:=\kappa(S)$ its field of rational functions.
Let $\kbm(t)\hra F$ be a finite extension of the 
rational function field $\kbm(t)$. 
\vskip2pt
1) $\lvPt S=\Proj\lvZ[t_0,t_1]\times S$ is the 
$t$-projective line over $S$, where $t=t_1/t_0$
is the canonical parameter on $\lvPt S$. In particular, 
$\lvPt S$ is the gluing of its canonical affine lines 
over~$S$, namely $\lvAt S:=\Spec\lvZ[t]\times S$ 
and $\lvA^1_{\tmu\!,\,S}:=\Spec\lvZ[\tmu]\times S$.
\vskip2pt
2) Let $\kbm(t)\hra F$ be a finite extension, and
$\clY_{t,S}\to \lvAt S$ and $\clY_{\tmu,S}\to\lvA^1_{\tmu,S}$
the corresponding normalizations in $\kbm(t)\hra F$.
Then the normalization $\clY_S\to\lvPt S$ of $\lvPt S$
in $\kbm(t)\hra F$ is nothing but the gluing of
$\clY_{t,S}\to \lvAt S$ and $\clY_{\tmu\!,\,S}\to\lvA^1_{\tmu\!,\,S}$.
\vskip2pt
3) For every $\eup\in S$ we denote by $\oli\eup\hra S$
the closure of $\eup$ in $S$ (endowed with the reduced
scheme structure). We denote by $\clO_\eup:=\clO_{S,\eup}$ 
the local ring at $\eup\in S$. We set $S_\eup:=\Spec\clO_\eup$ 
and consider the canonical morphism $S_\eup\hra S$. 
We notice that $\eup\hra S$ is both the generic fiber of 
$\oli\eup\hra S$ and the special fiber of $S_\eup\hra S$ 
at $\eup$. We get corresponding base changes:
\[
\clY_{\hhb1\oli\eup}\to\lvPt{\oli\eup}, \quad 
\clY_{S_\eup}\to\lvPt{S_\eup},\quad
  \clY_\eup\to\lvPt\eup
\]
where $\clY_\eup\to\lvPt\eup$ is both the generic fiber 
of $\clY_{\hhb1\oli\eup}\to\lvPt{\oli\eup}$ and the special
fiber of $\clY_{S_\eup}\to\lvPt{S_\eup}$.
\vskip2pt
4) Finally, affine schemes will be sometimes replaced
by the corresponding rings. Concretely, if $S=\Spec A$, 
and $\kbm={\rm Quot}(A)$, for a finite extension
$\kbm(t)\hra F$ one has/denotes:
\begin{itemize}
\vskip2pt
\item[a)] The $t$-projective line over $A$ is
$\lvPt A=\Spec A[t]\cup\Spec A[\tmu]$, 
and the normalization $\clY_A\to\lvPt A$ of $\lvPt A$ 
in $\kbm(t)\hra F$ is obtained as the gluing of 
$\Spec\clR_t\to\Spec A[t]$ and 
$\Spec\clR_{\tmu}\to\Spec A[\tmu]$, where $\clR_t$,
respectively $\clR_{\tmu}$, are the integral closures of $A[t]$, 
respectively of $A[\tmu]$, in the field extension $\kbm(t)\hra F$. 
\vskip2pt
\item[b)] For $\eup\in\Spec(A)$ one has/denotes:
$\clY_{A/\eup}\to\lvPt{A/\eup}$ and 
$\clY_{A_\eup}\to\lvPt{A_\eup}$ are the base 
changes of $\clY_A\to\lvPt A$ under $A\onto A/\eup$, 
respectively $A\hra A_\eup$; and finally, the fiber 
$\clY_{\kp\eup}\to\lvPt{\kp\eup}$ of $\clY_A\to\lvPt A$ 
is both, the special fiber of $\clY_{A_\eup}\to\lvPt{A_\eup}$ 
and the generic fiber of $\clY_{A/\eup}\to\lvPt{A/\eup}$.
\end{itemize}
\end{notations}
In the above notations, suppose that $A=\clO$ is a 
local ring with maximal ideal $\eum$ and residue 
field $\kp\eum$. Let $\clO_v$ be a valuation ring 
of $\kbm$ domination $\clO$ and having $\kp\eum=\kp v$. 
We denote by $\clY_\clO\to\lvPt\clO$ and 
$\clY_{\clO_v}\to\lvPt{\clO_v}$ the corresponding
normalizations of the the corresponding projective
lines. The canonical morphism $\Spec\clO_v\to\Spec\clO$
gives canonically a commutative diagrams dominant 
morphisms of the form:
\[
\begin{matrix}
\clY_{\clO_v}&\to&\lvPt{\clO_v}&\hhb{20}&
    \clY_\kbm&\to&\clY_{\clO_v}&\leftarrow&\clY_v\cr
\dwn{}&&\dwn{}&&\dwn{\cong}&&\dwn{}&&\dwn{}\cr
\clY_{\clO}&\to&\lvPt{\clO}&&
    \clY_\kbm&\to&\clY_{\clO}&\leftarrow&\clY_\eum\cr   
\end{matrix}
\]

We denote by $\eta_\eum\in\lvPt\eum$ the generic 
point of the special fiber of $\lvPt\clO$, and by 
$\eta_{\eum,i}\in\clY_\eum$ the generic points of the 
special fiber of $\clY_\clO$. Correspondingly, 
$\eta_v\in\lvPt v$ is the generic point of the special 
fiber of $\lvPt{\clO_v}$, and $\eta_{v,j}\in\clY_v$ are 
the generic points of the special fiber of $\clY_{\clO_v}$. 
We notice that $\lvPt v \to \lvPt\eum$ is an isomorphism
(because $\kp\eum=\kp v$), 
and by the valuation criterion for completeness, for every 
$\eta_{\eum,i}$ there exists some $\eta_{v,j}$ such that   
$\eta_{v,j}\mapsto\eta_{\eum,i}$ under $\clY_v\to\clY_\eum$.
The local ring $\clO_{\eta_v}$ of $\eta_v\in\lvPt{\clO_v}$ 
is the valuation ring of the so called \defi{Gauss valuation}
$v_t$ of $\kbm(t)$, thus $\clO_{\eta_{v,j}}$ are the valuation
rings of the prolongations $v_j$ of $v_t$ to $F$. Finally,
for every complete $k$-curve $C$ we denote by $g_C$ the
geometric genus of $C$.
\begin{lemma}
\label{preplemma}
In the above notations, let $Y_{\eum,1}\to\clY_{\eum,1}$ 
be the normalization of $\clY_{\eum,1}$. Suppose that 
$[\kappa(\eta_{\eum,1}):\kp\eum(t)]\geq[F:\kbm(t)]$ and 
that $\clY_\kbm$ is smooth. Then the following hold:
\vskip2pt
{\rm1)} The special fibers $\clY_\eum$ and $\clY_v$ are 
reduced and irreducible. 
\vskip2pt
{\rm2)} If $g_{Y_{\eum,1}}\geq g_{\clY_\kbm}$, then 
$\clY_{\clO_v}\to\lvPt{\clO_v}$ is a cover of smooth 
$\clO_v$-curves.
\end{lemma}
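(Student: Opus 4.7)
The plan is to use the fundamental equality for the Gauss valuation $v_t$ of $\kbm(t)$ (and its extensions to $F$) to control the components of $\clY_\eum$ and $\clY_v$, and then an arithmetic-vs-geometric genus comparison for part~2). Throughout set $n := [F:\kbm(t)]$. The local ring of $\lvPt\clO$ at the codimension-one point $\eta_\eum$ is the Gauss valuation ring $\clO_{v_t}$; the fundamental equality there reads $n = \sum_i m_i f_i$, where $m_i$ is the length of $\clO_{\clY_\clO,\eta_{\eum,i}}$ and $f_i := [\kappa(\eta_{\eum,i}):\kp\eum(t)]$. The hypothesis $f_1\geq n$ forces $f_1=n$, $m_1=1$, and no other component, so $\clY_\eum$ is reduced and irreducible.

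For $\clY_v$, the universal property of normalization applied to $\clY_{\clO_v}\to\lvPt{\clO_v}\to\lvPt\clO$ yields a canonical morphism $\clY_{\clO_v}\to\clY_\clO$; taking special fibers (using $\kp v=\kp\eum$) gives $\clY_v\to\clY_{\eum,1}$. By the valuation-criterion remark quoted in the text, each generic point $\eta_{v,j}$ of $\clY_v$ maps to $\eta_{\eum,1}$, inducing an inclusion of residue fields $\kappa(\eta_{\eum,1})\hookrightarrow\kappa(\eta_{v,j})=\kappa(v_j)$. Hence $f_j:=[\kappa(v_j):\kp v(t)]\geq n$, and substituting into the fundamental equality $n=\sum_j e_jf_j$ for the extensions $v_j$ of $v_t$ to $F$ forces a unique index $j$ with $e_j=1$ and $f_j=n$. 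Thus $\clY_v$ is reduced and irreducible, proving~1). Moreover $\clY_v\to\clY_{\eum,1}$ is a birational morphism between integral curves, so the normalizations coincide: $Y_v=Y_{\eum,1}$.

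For 2), the family $\clY_{\clO_v}\to\Spec\clO_v$ is proper (finite over the projective $\lvPt{\clO_v}$) and flat (torsion-free and of finite type over the DVR $\clO_v$), so its arithmetic genus is constant:
\[p_a(\clY_v)=p_a(\clY_\kbm)=g_{\clY_\kbm},\]
the second equality by the smoothness of $\clY_\kbm$. On the other hand, since $\clY_v$ is integral with normalization $Y_v=Y_{\eum,1}$,
\[p_a(\clY_v)\geq g_{Y_v}=g_{Y_{\eum,1}},\]
with equality if and only if $\clY_v$ is smooth. Combined with the hypothesis $g_{Y_{\eum,1}}\geq g_{\clY_\kbm}$, this forces equalities throughout, so $\clY_v$ is smooth. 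Smoothness of $\clY_{\clO_v}$ over $\clO_v$ then follows from flatness together with smoothness of both (generic and special) fibers.

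The main obstacle I anticipate is the construction of the morphism $\clY_v\to\clY_{\eum,1}$ and the verification of the induced residue-field inclusions: normalization does not commute with base change in general, so one must invoke the universal property rather than a direct base-change square. Everything else reduces to the fundamental equality and the invariance of arithmetic genus in a proper flat family.
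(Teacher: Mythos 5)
Your overall strategy — fundamental equality for part~1, conservation of Euler characteristic/arithmetic genus in the flat family $\clY_{\clO_v}\to\Spec\clO_v$ for part~2 — is exactly the paper's approach, and most of your argument goes through. But there are a few issues worth flagging, one of them a genuine error.

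\textbf{The error in the argument for $\clY_\eum$.} You assert that $\eta_\eum$ is a codimension-one point of $\lvPt\clO$ whose local ring is the Gauss valuation ring $\clO_{v_t}$, and then invoke a fundamental equality $n=\sum_i m_if_i$ there. This is false in the generality intended: $\clO$ is an arbitrary local ring (in the application, $\clO=\clO_{W,y}$ for $y$ a point of arbitrary codimension in a scheme $W$), so $\eta_\eum$ has codimension $\dim\clO$ in $\lvPt\clO$, not one, and its local ring is not a DVR, let alone the Gauss valuation ring of the valuation $v$ of $\clO_v$. The fundamental equality with lengths in the form you cite requires at least a one-dimensional base and does not apply here. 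Note that the paper does not argue for $\clY_\eum$ this way at all; it first establishes that $v_t$ has a unique prolongation to $F$ with trivial ramification and trivial Ostrowski defect, which gives $\clY_v$ reduced and irreducible, and then irreducibility of $\clY_\eum$ follows from the surjection $\clY_v\to\clY_\eum$. Your own subsequent handling of $\clY_v$ (using the residue-field inclusion $\kappa(\eta_{\eum,1})\hookrightarrow\kappa(\eta_{v,j})$) is the correct route and is precisely what the paper does; the separate length argument for $\clY_\eum$ should be dropped.

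\textbf{Two smaller points.} (a) You write the fundamental equality for $v_t$ on $F$ as $n=\sum_j e_jf_j$. Over a non-Henselian, non-discrete valuation there can be Ostrowski defect, so in general one only has $\sum_j e_jf_j\delta_j\leq n$ (or $=n$ if you include the defect $\delta_j$). Your argument only uses the inequality $\sum_je_jf_j\leq n$, so it is not wrong, but the paper carries the defect term explicitly and concludes $e(v_1|v_t)=1=\delta(v_1|v_t)$; you should not present it as an honest equality without the defect. (b) The genus-constancy you invoke is, over the (possibly non-Noetherian, higher-rank) valuation ring $\clO_v$, exactly Roquette's Satz~I, which is what the paper cites. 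It is not the garden-variety Noetherian statement about $p_a$ in a flat proper family, so it deserves a citation. Your claim that $\clO_v$ is a DVR is also wrong; torsion-free $\Rightarrow$ flat still holds over any valuation ring, so the conclusion survives, but the reasoning as written is incorrect. Finally, the shortcut $Y_v=Y_{\eum,1}$ needs the observation that $\kappa(\eta_{\eum,1})\hookrightarrow\kappa(\eta_{v,1})$ is forced to be an isomorphism once both sides have degree $n$ over $\kp\eum(t)=\kp v(t)$; the paper avoids this by using only the inclusion $\kappa(\clY_\eum)\hookrightarrow\kappa(\clY_v)$ together with the Riemann--Hurwitz inequality $g_{Y_{\eum,1}}\leq g_{Y_v}$, which is all the chain of inequalities requires.
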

\begin{proof}
To 1): Let $\eta_{v,1}\mapsto\eta_{\eum,1}$.
Then $\kp{\eta_{\eum,1}}\hra\kp{\eta_{v,1}}$, thus
$[\kp{\eta_{\eum,1}}:\kp\eum(t)]\leq[\kp{\eta_{v,1}}:\kp v(t)]$.
Hence using the fundamental equality and the hypothesis 
one gets that
\[
[\kappa(\eta_{v,1}):\kp v(t)]\geq[\kappa(\eta_{\eum,1}):\kp\eum(t)]\geq
[F:\kbm(t)]={\textstyle\sum}_j[\kp{v_j}:\kp{v(t)}]e(v_j|v_t)\delta(v_j|v_t),
\]
where $e(\cdot|\cdot)$ is the ramification index and 
$\delta(\cdot|\cdot)$ is the Ostrowski defect. Hence we 
conclude that $w_1$ is the only prolongation of $v_t$ to $F$, 
and $e(v_1|v_t)=1=\delta(v_1|v_t)$. 
\vskip2pt
To 2): Since $\clY_v$ is reduced and irreducible, by
\nmnm{Roquette}~\cite{Ro1}, Satz~I, it follows that the 
Euler characteristics of the special fiber $\clY_v$ and 
that of the generic fiber $\clY_\kbm$ of $\clY_{\clO_v}$ 
are equal: 
\[
\chi(\clY_\kbm|\kbm)=\chi(\clY_v|\kp v).
\]
Since $\clY_v$ dominates $\clY_\eum$, one has
$\kappa(\clY_\eum)\hra\kappa(\clY_v)$, hence one
has $g_{\clY_\eum}\leq g_{\clY_v}$. Thus~2)
implies:
\[ 
1-g_{Y_{\eum,1}}\leq1-g_{\clY_\kbm}=\chi(\clY_\kbm|\kbm)=
     \chi(\clY_v|\kp v)\leq\chi(Y_v|\kp v)\leq\chi(Y_{\eum,1}|\kp\eum)
         =1-g_{Y_{\eum,1}}.
\]    
Hence all the above are equalities, thus finally one has
that $(\clY_v|\kp v)\leq\chi(Y_v|\kp v)$. Therefore, the
normalization $Y_v\to\clY_v$ is an isomorphism, and 
$\clY_{\clO_v}$ is smooth.
\end{proof}
In the context above, let $A=\clO$ be a {\it valuation ring\/}
and $v$ be its valuation. Let $v:=v_0\circ v_1$ be the 
valuation theoretical composition of two valuations, say 
with valuation rings $\clO_1\subset\kbm$, respectively 
$\clO_0\subset\kbm_0$, where $\kbm_0:=\kbm v_1$ is 
the residue field of~$v_1$. Then $\kbm v=:k:=\kbm_0v_0$
is the residue field of both $v$ and $v_0$. Let $t \in F$ 
is a fixed function, and $t_0:=tv_{0,t}$ be the residue 
of $t$ with to the Gauss valuation $v_{0,t}$ on $\kbm(t)$. 
Suppose that the following hold:
\vskip2pt
$\hhb2$i) The special fiber $\clY_{1,\hhb1s}$ of the normalization 
$\clY_1\to\lvPt{\clO_1}$ of $\lvPt{\clO_1}$ in $\kbm(t)\hra F$ 
is irreducible. Thus $v_{1,t}$ has a unique prolongation 
$w_1$ to $F$, and $F_0:=Fw_1=\kappa(\clY_{1,\hhb1s})$.
\vskip2pt
ii) The special fiber $\clY_{0,\hhb1s}$ of the normalization 
$\clY_0\to\lvPt{\clO_0}$ of $\lvPt{\clO_0}$ in 
$\kbm_0(t_0)\hra F_0$ is irreducible. Thus $v_{0,t_0}$ 
has a unique prolongation $w_0$ to $F_0$, and 
$F_0w_0=\kappa(\clY_{0,\hhb1s})$.
\begin{lemma} 
\label{transsmooth}
{\rm(Transitivity of smooth covers)} 
In the above notations, suppose that the hypotheses~i),~ii)
are satisfied. Set $w:=w_0\circ w_1$, and let 
$\,\clY\to\lvPt{\clO}$ be the normalization of $\lvPt{\clO}$ 
in $\kbm(t)\hra F$. Then $w$ is the unique prolongation 
of $v_t$ to $F$, and the following hold:
\begin{itemize}
\item[{\rm 1)}] The base change of $\,\clY$ under 
$\clO\hra\clO_{v_1}$
is $\,\clY_1=\clY\times_\clO\clO_{v_1}$ canonically, thus
$\clY_{1,\hhb1s}=\clY_{\eum_1}$ is the fiber of $\,\clY$ at
the valuation ideal $\eum_1\in\Spec\clO$ of $v_1$. 
\vskip2pt
\item[{\rm 2)}] Let $\clY_{\clO_0}\to\lvPt{\clO_0}$
be the base change of $\,\clY\to\lvPt\clO$ under the
$\clO\onto\clO_0$. Then~$\,\clY_{\eum_1}$ is the generic fiber 
of $\clY_{\clO_0}$ and $\clY_0\to\lvPt{\clO_0}$ is the 
normalization of $\clY_{\clO_0}\to\lvPt{\clO_0}$.
\end{itemize}
In particular, $\clY$ is a smooth $\clO$-curve if and only 
if $\clY_1$ is a smooth $\clO_1$ curve and  $\clY_0$ is a 
smooth $\clO_0$-curve.
\end{lemma}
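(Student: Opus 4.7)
The argument splits into three stages: uniqueness of the prolongation $w$, the identifications in (1) and (2), and the smoothness equivalence.

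For the uniqueness of $w$, I would argue that since $v_t = v_{0,t_0} \circ v_{1,t}$ is a composite valuation, any prolongation of $v_t$ to $F$ decomposes as a composition of prolongations of the two factors. Hypothesis (i) forces the inner factor to be $w_1$ (with residue field $F_0$), and hypothesis (ii) then forces the outer factor to be $w_0$, giving $w = w_0 \circ w_1$ as the unique prolongation of $v_t$. Parts (1) and (2) are formal: for (1), $\clO \hookrightarrow \clO_1$ is the localization at $\eum_1$, and integral closure commutes with localization, so $\clY \times_\clO \clO_1 \cong \clY_1$, with the fiber at $\eum_1$ being the special fiber of $\clY_1/\clO_1$. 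For (2), $\clY_{\clO_0} = \clY \otimes_\clO \clO_0$ is the base change to $\clO/\eum_1 = \clO_0$, with $\clY_0$ its normalization by definition; the image of $\eum_1$ in $\Spec\clO_0$ is the generic point, so $\clY_{\eum_1}$ is the generic fiber of $\clY_{\clO_0}$.

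For the smoothness equivalence, the direction ($\Rightarrow$) is automatic: smoothness descends under base change, and smooth implies normal, so $\clY_{\clO_0} = \clY_0$. For the converse, assuming smoothness of $\clY_1/\clO_1$ and $\clY_0/\clO_0$, I would apply Lemma~\ref{preplemma} to the $\clO_0$-family $\clY_{\clO_0} \to \lvPt{\clO_0}$, aiming to upgrade $\clY_{\clO_0}$ itself to a smooth $\clO_0$-curve. The generic fiber $\clY_{\eum_1}$ is smooth over $\kbm_0$ by part (1) and smoothness of $\clY_1$. The degree inequality of Lemma~\ref{preplemma} will follow from smoothness of $\clY_0/\clO_0$ (which forces $e = f = \delta = 1$ at the unique prolongation $w_0$ of hypothesis (ii)) together with the fact that $\clY_{0,s} \to \clY_s$ is finite birational (both $\clY_0$ and $\clY_{\clO_0}$ being $\clO_0$-flat of the same generic degree over $\lvPt{\clO_0}$). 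The genus inequality $g(\clY_{0,s}) \geq g(\clY_{\eum_1})$ holds because the smooth family $\clY_0/\clO_0$ has constant arithmetic genus. Lemma~\ref{preplemma}(2) then gives smoothness of $\clY_{\clO_0}/\clO_0$, hence in particular smoothness of $\clY_s$ over $k$. Combined with smoothness of the fibers over $(0)$ and $\eum_1$ (inherited from $\clY_1$), and the $\clO$-flatness of $\clY$ (automatic, since $\clY$ is integral with structure sheaf embedding into $F$, hence $\clO$-torsion-free), this yields $\clY/\clO$ smooth.

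The hardest part will be the last step: verifying that the hypotheses of Lemma~\ref{preplemma} are genuinely met for the $\clO_0$-family, in particular that the residue field at the generic point of the unique component of $\clY_s$ is as large as smoothness of $\clY_0$ and the unique prolongation demand. This rests on the birationality of $\clY_0 \to \clY_{\clO_0}$ descending to a birational map on special fibers, which follows from a degree count using $\clO_0$-flatness of both schemes.
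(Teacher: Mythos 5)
Your overall architecture matches the paper's intent: the uniqueness of $w$ via decomposition, the formal identifications in 1) and 2), the easy forward implication, and the reliance on Roquette's \emph{Satz~I} (which is the engine inside Lemma~\ref{preplemma}) plus the ``arithmetic genus $=$ geometric genus'' criterion for the converse. The paper's own proof is a one-line invocation of exactly those two facts, so filling in details is fair game. However, the final step of your converse has a concrete gap.

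You write that ``Lemma~\ref{preplemma}(2) then gives smoothness of $\clY_{\clO_0}/\clO_0$.'' It does not. Lemma~\ref{preplemma} is formulated for the \emph{normalization} $\clY_{\clO_v}$ of $\lvPt{\clO_v}$ in $\kbm(t)\hra F$, and its conclusion~2) asserts smoothness of that normalization. Applied with $\clO=\clO_v=\clO_0$, it gives smoothness of $\clY_0$ --- which is one of your standing assumptions --- and says nothing directly about the non-normalized base change $\clY_{\clO_0}$. Your proposal never actually proves that the finite birational morphism $\clY_0\to\clY_{\clO_0}$ is an isomorphism, so smoothness of $\clY_s$ (the special fiber of $\clY_{\clO_0}$) is not established.

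To close the gap, you need one more application of Roquette's \emph{Satz~I}, this time to $\clY/\clO$ itself: since $\clO$ is a valuation ring and $\clY$ is normal with reduced irreducible special fiber $\clY_s$ (uniqueness of $w$), Roquette gives $p_a(\clY_s)=g(\clY_\kbm)$. On the other side, Roquette applied to $\clY_0/\clO_0$ gives $p_a(\clY_{0,s})=g(\clY_{\eum_1})$, and smoothness of $\clY_1/\clO_1$ gives $g(\clY_{\eum_1})=p_a(\clY_{\eum_1})=g(\clY_\kbm)$ (Roquette for $\clY_1/\clO_1$). Smoothness of $\clY_0/\clO_0$ gives $g(\clY_{0,s})=p_a(\clY_{0,s})$. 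Since $\clY_{0,s}\to\clY_s$ is finite birational, the two special fibers have equal geometric genus, so chaining these equalities yields $g(\clY_s)=p_a(\clY_s)$; hence $\clY_s$ is smooth, the normalization map is an isomorphism, and $\clY/\clO$ is smooth by flatness plus fiberwise smoothness. This is in fact how the paper intends the ``Klar'' to be read: it chains the genus equalities directly from Roquette rather than re-routing through Lemma~\ref{preplemma}, which is the cleaner way to organize the same computation. Your ending appeal to flatness plus fiberwise smoothness is fine once smoothness of $\clY_s$ is actually in hand.
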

\begin{proof} Klar, by the discussion above, and \nmnm{Roquette}
\cite{Ro1},~Satz~I, combined with the fact that a projective 
curve is smooth if and only if its arithmetic genus equal its 
geometric genus. 
\end{proof}

\vskip5pt
\noindent
B) \ {\it A specialization result\/}
\vskip5pt
We begin by recalling the following two well known facts. 
The first one is by \nmnm{Katz} (and \nmnm{Gabber})~\cite{Ka}: 
Let $k$ be an algebraically closed field with $\chr(k)=p$. Then 
the localization at $t=0$ defines a bijection between the finite 
Galois $p$-power degree covers of $\lvPt k$ unramified 
outside $t=0$ and the finite Galois $p$-power extensions 
$k[[t]]\hra k[[z]]$, and this bijection preserves the ramification 
data. Thus given a cyclic $\lvZ/p^\mx\,$-cover $k[[t]]\hra k[[z]]$, 
there exists a unique cyclic $\lvZ/p^\mx\,$ cover of complete 
smooth curves $Y_k\to\lvPt k$ which is branched only at 
$t=0$ (thus totally branched there) such that $k[[t]]\hra k[[z]]$ 
is the extension of local rings of $Y\to\lvPt k$ above $t=0$. 
We will say that $Y\to\lvPt k$ is the \defi{KG\hhb1-\hhb1cover} for 
$k[[t]]\hra k[[z]]$. 
The second fact is the local-global principle for the Oort
Conjecture, see e.g.\ \nmnm{Garuti}~\cite{Ga1},~\S3,
\nmnm{Saidi}~\cite{Sa},~\S1.2, especially~Proposition~1.2.4, 
which among other things imply:
\begin{LGP} 
\label{LGP}
Let $k[[t]]\hra k[[z]]$ be a $\lvZ/p^\mx$-extension and
$Y_k\to\lvPt k$ be its KG-cover. Further let $W(k)\hra R$ be 
a finite extension of $W(k)$. Then the $\lvZ/p^\mx$-extension
$k[[t]]\hra k[[z]]$ has a smooth lifting over $R$ if and only 
if the $\lvZ/p^\mx$-cover $Y_k\to\lvPt k$ has a smooth 
lifting over $R$.
\end{LGP}
Next let $\mx$ be a fixed positive integer, and 
consider a finite sequences of positive numbers 
$\uix:=(\ix_1\leq\dots\leq\ix_\mx)$ satisfying: $1\leq\ix_1$ 
is prime to $p$, and $\ix_{\alp+1}=p\hhb1\ix_\alp+\epsilon$
with $\epsilon\geq0$ and $\epsilon$ prime to $p$ if $\epsilon>0$.
For such a sequence $\uix$, let $|\uix|=\ux_1+\dots+\ux_\mx$ 
and consider ${\undr P}_{\uix}=\big(P_1,\dots,P_\mx)$ a 
sequence of generic polynomials $P_\nu=P_\nu(\tmu)$ of 
degrees $\deg\big(P_\alp\big)=\ix_\alp$ for $1\leq\alp\leq\mx$.
In other words, all the coefficients $a_{\nu,\rhal}$, 
$1\leq\nu\leq\mx$, $1\leq\rhal\leq\ix_\nu$ of the polynomials 
$P_\alp$ are independent free variables over $\kzr:=\oli\lvF_p$. 
Let $A_{\uix}:=\kzr[(a_{\nu,\rhal})_{\nu,\rhal}]$ be the 
corresponding polynomial ring and $\lvAix=\Spec A_{\uix}$ 
the resulting affine space over $\kzr$. 
\vskip2pt
For every $x\in\lvAix$ let $k_x$ be any algebraically 
closed field extension of $\kzr$, and $\oli x\in\lvAix(k_x)$ 
be a $k_x$-rational point of $\lvAix$ defined by a 
$\kzr$-embedding $\phi_x:\kp x\hra k_x$. Let 
${\undr p}_{\uix,x}=(p_{1,x},\dots,p_{\mx,x})$ and 
${\undr p}_{\uix,\oli x}=(p_{1,\olix},\dots,p_{\mx,\olix})$ be 
the images of ${\undr P}_\uix$ over $\kp x$, respectively
$k_x$. Then one has virtually by definitions that
$p_{\nu,\olix}=\phi_x(p_{\nu,x})$, thus 
${\undr p}_{\uix,\olix}=\phi_x({\undr p}_{\uix,x})$.
In particular, if $\deg(P_\alp)=\deg(p_\alp)$ for all $\alp$, 
then ${\undr p}_{\uix,x}$ gives rise to a cyclic extension 
$k_x[[t]]\hra k_x[[z_x]]$ of degree $p^\mx$ and upper 
jumps $\uix=(\ux_1,\dots,\ux_\mx)$, and canonically to
its KG-cover $Y_{k_x}\to\lvPt{k_x}$.
\begin{definition} For $\kzr\hra k_x$ as above, let 
$k_x[[t]]\hra k_x[[z_x]]$ be a cyclic $\lvZ/p^\mx$-extension 
and $Y_{k_x}\to\lvPt{k_x}$ be its $KG$-cover. We say that
$k_x[[t]]\to k_x[[z_x]]$ is an $\uix$-\defi{extension} at $x\in\lvAix$ 
and that $Y_{k_x}\to\lvPt{k_x}$ is an $\uix$-KG-\defi{cover} 
at $x\in\lvAix$, if $k_x[[t]]\hra k_x[[z_x]]$ has 
$\uix=(\ix_1,\dots,\ix_\mx)$ as upper ramification jumps.    
\end{definition}
\begin{notations}
\label{notalast}
We denote by $\Sigma_\uix\subset\lvAix$ the set of all 
$x\in\lvAix$ which satisfy: There exists some mixed 
characteristic valuation ring $R_x$ with residue field 
$k_x$ such that some $\uix$-KG-cover $Y_{k_x}\to\lvPt{k_x}$ 
has a smooth lifting over $R_x$.
\end{notations}
%
%
\begin{proposition}
\label{specprop}
In Notations~\ref{notalast}, suppose that $\Sigma_\uix\subseteq\lvAix$ 
is Zariski dense. Then there exists an algebraic integer 
$\pi_{\uix}$ such that for every algebraically closed field 
$k$ of characteristic  $\chr(k)=p$ one has: Every $\uix$-KG-cover 
$Y_k\to\lvPt k$ has a smooth lifting over $W(k)[\pi_{\uix}]$.
\end{proposition}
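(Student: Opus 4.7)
The strategy is to extract from the Zariski density of $\Sigma_\uix$ a single \emph{universal} smooth lifting of the generic $\uix$-KG-cover, defined over a $W(\kzr)[\pi_\uix]$-algebra for one fixed algebraic integer $\pi_\uix$, and then to specialize this universal lift in order to obtain, for every algebraically closed $k$ of characteristic $p$ and every $\uix$-KG-cover $Y_k \to \lvPt k$, a smooth lifting over $W(k)[\pi_\uix]$.

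First I would execute a constructibility-plus-density argument in the spirit of Chevalley's theorem. One introduces a moduli-type scheme $\clM_\uix \to \lvAix$, of finite presentation over a mixed-characteristic base of the form $\Spec W(\kzr)[\pi]$ (with $\pi$ ranging over algebraic integers), whose fiber over $x \in \lvAix$ parametrizes smooth liftings of the $\uix$-KG-cover at $x$. The rigidity of the upper ramification jumps $\uix$, together with Kato's criterion Fact~\ref{KCR} which identifies smoothness of a lift with equality of the two different degrees, should bound $\clM_\uix$ to be of finite type, so that Chevalley's theorem applies: the image of the forgetful map $\clM_\uix \to \lvAix$ is constructible. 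By hypothesis this image contains $\Sigma_\uix$, hence is Zariski dense, hence contains a non-empty open $U \subseteq \lvAix$. A section of $\clM_\uix$ over $U$, possibly after replacing $U$ by an open of a finite \'etale cover, then yields a universal smooth lifting of the $\uix$-KG-cover parametrized over $U$ and defined over $W(\kzr)[\pi_\uix]$ for a single algebraic integer $\pi_\uix$ depending only on $\uix$.

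Next I would specialize the universal lift to arbitrary points. For $x \in U(k)$ one simply base-changes the universal lift along $x$ and invokes Fact~\ref{KCR} to confirm smoothness of the base change. For $x$ outside $U(k)$, I would use the two-step valuation-theoretic device of Lemma~\ref{transsmooth}: choose a discrete valuation $v$ on some algebraic closure of $\kp{\eta}$ (where $\eta$ is the generic point of $\lvAix$) with composition $v = v_0 \circ v_1$, such that $v_1$ specializes $\eta$ into a point of $U(k')$ for some algebraically closed $k' \supseteq k$, while $v_0$ further specializes that point to $x$ itself. Base-changing the universal lift along $v_1$ then produces a smooth lift over $W(k')[\pi_\uix]$ of a $\uix$-KG-cover parametrized by a point of $U(k')$; Lemma~\ref{transsmooth} then pieces this smooth lift together with the further $v_0$-specialization to $Y_k$, producing a smooth lift of $Y_k \to \lvPt k$ over $W(k)[\pi_\uix]$.

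The principal obstacle is the constructibility step: one has to model the moduli of smooth liftings as a scheme of finite type in a way which makes Chevalley's theorem applicable and which extracts a single ramification parameter $\pi_\uix$ depending only on $\uix$ (and in particular independent of $k$ and of the choice of KG-cover). The specialization step is essentially dictated by Lemma~\ref{preplemma} and Lemma~\ref{transsmooth}; its delicate point is arranging the valuation $v$ so that both stages of the composition $v_0 \circ v_1$ preserve smoothness of the lift along the way.
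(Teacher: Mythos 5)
The central gap in your proposal is in the constructibility step. You introduce a moduli-type scheme $\clM_\uix \to \lvAix$ of finite presentation over a base of the form $\Spec W(\kzr)[\pi]$ and assert that Kato's criterion and the rigidity of $\uix$ bound it to be of finite type. But the definition of $\Sigma_\uix$ (Notations~\ref{notalast}) allows the lifting ring $R_x$ to be an \emph{arbitrary} mixed-characteristic valuation ring with residue field $k_x$ — there is no a priori bound on its ramification, and hence no single algebraic integer $\pi$ one can fix in advance. If you fix $\pi$ and look at the locus $\Sigma_{\uix,\pi}\subseteq\lvAix$ of points liftable over $W(k_x)[\pi]$, that locus may well be constructible; but $\Sigma_\uix$ is the union over infinitely many $\pi$, and a Zariski-dense countable union of constructible sets in an irreducible scheme over $\oli\lvF_p$ need not contain a dense constituent (already $\lvA^1_{\oli\lvF_p}$ is a countable union of closed points). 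So "by hypothesis this image contains $\Sigma_\uix$, hence is dense, hence contains a non-empty open" does not follow for any single $\clM_{\uix,\pi}$. The boundedness of $\pi$ is not an input to the argument that Chevalley can then exploit — it is precisely the hard output that Proposition~\ref{specprop} is asserting.

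The paper circumvents this by a two-stage argument that is genuinely different from yours. First it uses an ultraproduct over an ultrafilter containing the Zariski-open dense subsets of $\Sigma_\uix$ to compile the (possibly wildly ramified) liftings $\clY_{R_x}\to\lvPt{R_x}$ into a single lifting $\clY_{\str2R}\to\lvPt{\str2R}$ over a huge valuation ring $\str2R$, which places (a representative of) the generic point $\eta_\uix$ into $\Sigma_\uix$. It then descends this lifting to a projective normal $\Srz$-model $\V$, blows up to reduce to a codimension-one center, and — crucially — invokes de Jong's alterations to replace $\V$ by a strictly semi-stable $\Srp$-scheme $\W$. The strictly semi-stable structure gives explicit regular parameters at the points $y\in\oli\euq$, from which the paper reads off that the relevant ramification indices $n_y$ are bounded by $\dim(\W)-1$ factorial; this produces the single $\pi_\uix$ via $\pi_\uix^{n_\uix}=\piz$. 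Your specialization step — composing valuations and invoking Lemma~\ref{transsmooth} — is in the spirit of the paper's Step~2 and Lemma~\ref{speclemma}, but without the semi-stable model and its bounded-$n_y$ structure there is no candidate for $\pi_\uix$ to specialize to. In short: the Chevalley route assumes the boundedness it needs to prove, while the paper's ultraproduct-plus-alterations route is what actually produces it.
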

\begin{proof} The proof is quite involved, and has 
two main steps as follows:
\vskip7pt
\noindent
{\bf Step 1}. {\it Proving that the generic point 
                     $\eta_\uix\in\lvAix$ lies in $\Sigma_\uix$\/}
\vskip5pt
Let $\eu U$ be an ultrafilter on $\Sigma$ 
which contains all the Zariski open subsets of $\Sigma$. 
(Since $\Sigma$ is Zariski dense in the irreducible 
scheme $\lvAix\!$, any Zariski open subset of $\Sigma$ 
is dense as well, thus ultrafilter $\eu U$ exist.) 
Let $k_x\to\Theta_x\subset R_x$ be any set of 
representatives for  for $R_x$. Consider the following 
ultraproducts index by $\Sigma$: 
\[
\str2k:=\ultrb{k_x}\to\str1\Theta:=\ultrb{\Theta_x}
\subset \Wstr:=\ultrb{W(k_x)}\hra\ultrb{R_x}=:\str2R.\ \
\]
By general model theoretical principles, it follows that 
$\str2R$ is a valuation ring having residue field equal to 
$\str2k$, and $\str1\Theta\subset\str2R$ is a system of 
representatives for the residue field $\str2k$ of $\str2R$.
\vskip5pt
Next, coming to geometry, by general model theoretical
principles, it follows that the family of $\lvZ/p^\nx$-covers 
$Y_x\to\lvPt{k_x}$ with upper ramification jumps 
$\uix=(\ix_1,\dots,\ix_f)$ gives rise to a ${\eu U}$-\defi{generic} 
$\lvZ/p^\mx$-cover $Y_{\str2k}\to\lvPt{\str2k}$ of complete 
smooth $\str2k$-curves with upper ramification jumps~$\uix$. 
Precisely, setting $\str1{p_\nu}:=(p_{\nu,x})_x/{\eu U}$,
the system of polynomials 
$\str1{\undr p}_{\uix}=
         \big(\str1{p_1}(\tmu),\dots,\str1{p_\mx}(\tmu)\big)$
defines the local extension $\str2k[[t]]\hra\str2k[[z]]$ of 
$Y_{\str2k}\to\lvPt{\str2k}$ at $t=0$. Moreover, consider 
the $\str2k$-rational point $\str3\phi_{\eta_\uix}$ of $\lvAix$ 
defined by 
\[
\str3\phi_{\eta_\uix}:\kp{\eta_\uix}\to\str2k, \quad
\str3\phi_{\eta_\uix}(a_{\nu,\rhal}):=
                  \big(\phi_x(a_{\nu,\rhal})\big)_{\!x\,}/{\eu U}.
\leqno{\indent(*)}                  
\]
Then $\str1{\undr p}_\uix=\str3\phi_{\eta_\uix}({\undr P}_\uix)$,
which means that $\str3\phi_{\eta_\uix}$ is the 
$\str2k$-rational point of $\lvAix$ defining $\str1{\undr p}_\uix$.
\vskip4pt
Again, by general model theoretical principles for 
ultraproducs of (covers of) curves, the family of the 
$\lvZ/p^\mx$-covers $\clY_{R_x}\to\lvPt{R_x}$ with special 
fiber $Y_x\to\lvPt{k_x}$ gives rise to a $\lvZ/p^\mx$-cover
$\clY_{\str2R}\to\lvPt{\str2R}$ of complete smooth 
$\str2R$-curves, with $Y_{\str2k}\to\lvPt{\str1k}$ as special 
fiber. 
\vskip4pt
Let $\lvAix\hra\lvPix$ be the canonical
embedding of the affine $\kzr$-space 
$\lvAix:=\Spec \kzr[(a_{\nu,\rhal})_{\nu,\rhal}]$
into the corresponding projective $\kzr$-space
$\lvPix:=\Proj \kzr[t_0, (t_{\nu,\rhal})_{\nu,\rhal}]$
via the $t_0$-dehom\-ogen\-iza\-tion 
$a_{\nu,\rhal}=t_{\nu,\rhal}/t_0$. Letting 
$\Srz:=\lvZ^{^{\rm nr}}_p$ be the maximal unramified 
extension, and
\[
\lvAixZ=\Spec\Srz[(a_{\nu,\rhal})_{\nu,\rhal}]\ \ {\rm and}
\ \ \lvPixZ=\Proj\Srz[t_0, (t_{\nu,\rhal})_{\nu,\rhal}],
\] 
the embedding $\lvAix\hra\lvPix$ is the special 
fiber of $\lvAixZ\hra\lvPixZ$. Notice that 
$\str3\phi_{\eta_\uix}:A_\uix\to\str2k$ gives rise via 
$\str2k\to\str1\Theta$ canonically to an embedding 
of $\Srz$-algebras defined by
\[
\str3\phi_{\Scz}:A_{\uix,\Scz}:=
     \Srz[(a_{\nu,\rhal})_{\nu,\rhal}]\hra\str2R,\quad 
        a_{\nu,\rhal}\mapsto\str3{\phi_{\eta_\uix}}(a_{\nu,\rhal}).
\]

Let $\V$ be a projective normal $\Srz$-scheme with function
field $\kappa(\V)$ embeddable in ${\rm Quot}(\str2R)$, say 
via $\kappa(\V)\hra{\rm Quot}(\str2R)$, such that the following
are satisfied: 
\vskip4pt
1) Let $\eupz\in \V$ be the center of $\wstr$ 
of $\str2R$ on $\V$ induced by $\kappa(\V)\hra\str2R$, and 
$\clOz$ the local ring of $\eupz$. Then the $\lvZ/p^\mx$-cover 
of complete smooth $\str2R$-curves $\clY_{\str2R}\to\lvPt{\str2R}$ 
is defined over $\clOz$. 
\vskip4pt
2) The image of $\str3\phi_\Scz:A_{\uix,\Scz}\to\str2R$ is 
contained in the image of $\kappa(\V)\hra\str2R$, and the 
resulting embedding $A_{\uix,\Scz}\hra\kappa(\V)$ is defined 
by some proper morphism 
\[
\V\to\lvPixZ.
\]

We notice that condition~1) means that there exists a
$\lvZ/p^\mx$-cover of complete smooth $\clOz$-curves 
$\clY_{\clOz}\to\lvPt{\clOz}$ such that 
$\clY_{\str2R}\to\lvPt{\str2R}$ is the base change 
of $\clY_{\clOz}\to\lvPt{\clOz}$ under $\clOz\hra\str2R$.
In particular, if $\clOz\to\kp{\eupz}$ is the 
residue field of $\clOz$, then the special fiber 
$Y_{\eupz}\to\lvPt{\eupz}$ of 
$\clY_{\clOz}\to\lvPt{\clOz}$ is a $\lvZ/p^\mx$-cover 
of complete smooth $\kp{\eupz}$-curves whose 
base change under $\kp{\eupz}\hra\str2k$ is canonically 
isomorphic to $Y_{\str2k}\to\lvPt{\str2k}$. In other words, 
the embedding $\str3\phi_{\eta_\uix}: A_\uix\hra\str2k$ 
defined by~$(*)$ above factors
through $A_\uix\hra\kp{\eta_\uix}\hra\kp{\eupz}$,
and $\eupz\in \V$ is mapped to the generic point 
$\eup\mapsto\eta_\uix$ of the special fiber 
$\eta_\uix\in\lvPix\hra\lvPixZ$ under $\V\to\lvPixZ$.
\vskip2pt
Recall that $\oli\eupz\subset \V$ the Zariski 
closure of $\eupz$ in $\V$ viewed as a closed 
$\Srz$-subscheme of~$\V$ endowed with the 
reduced scheme structure.
Since $\eup\mapsto\eta_\uix$, one has that 
$\kp{\eta_\uix}\hra\kp{\eupz}$, hence $\kp{\eupz}$ 
has characteristic~$p$, and $\eupz$ lies in the special 
fiber $\V_{\kzr}$ of $\V\!$. We conclude that 
$\oli\eupz\subset \V_{\kzr}$. 
\vskip2pt
Next, if $\eupz$ has codimension $>1$, let 
$\tlV \to \V$ be the normalization of the blowup 
of $\V$ along the closed $\Srz$-subscheme $\oli\eupz$. 
Let $E_1,\dots, E_r\subset\tlV$ be the finitely 
many irreducible components of the preimage of the 
exceptional divisor of the blowup. Then the generic 
points $\eur_i$ of the $E_i$, $i=1,\dots,r$ are 
precisely the points of codimension one of $\tlV$ 
which map to $\eupz$ under $\tlV\to \V$, and 
$\cup_i E_i$ is the preimage of $\oli\eupz$ in $\V$. 
Further, if $\eur=\eur_i$ is fixed, and $\tlclO$ 
is the local ring of $\eur\in \tlV$ and $\kp{\eur}$ 
is its residue field, it follows that $\clOz\hra\tlclO$
and $\kp{\eupz}\hra\kp{\eur}$ canonically. Recall that by
the property~1) above, $\clY_{\clOz}\to\lvPt{\clOz}$ is a 
$\lvZ/p^\mx$-cover of smooth $\clOz$-curves with special 
fiber $Y_{\eupz}\to\lvPt{\eupz}$, whose base change
under $\kp{\eupz}\hra\str2k$ is $Y_{\str2k}\to\lvPt{\str2k}$.
Therefore, the base change $\clY_{\tlclO}\to\lvPt{\tlclO}$ 
of $\clY_{\clOz}\to\lvPt{\clOz}$ defined by the inclusion 
$\clOz\hra\tlclO$ is a $\lvZ/p^\mx$ cover of proper 
smooth $\tlclO$-curves whose special fiber 
$Y_{\eur}\to\lvPt{\eur}$ is the base change 
of $Y_{\eupz}\to\lvPt{\eupz}$ under 
$\kp{\eupz}\hra\kp{\eur}$. Hence choosing any
$\kp{\eupz}$-embedding $\kp{\eur}\hra\str2k$, we get 
that the special fiber $Y_{\eur}\to\lvPt{\eur}$
becomes $Y_{\str2k}\to\lvPt{\str2k}$ under 
$\kp{\eur}\hra\str2k$.
\vskip2pt
Hence by replacing $\V$ by $\tlV$ if necessary, we
can suppose that $\eupz\in \V$ has codimension one,
or equivalently, that $\oli\eupz\subset \V_{\kzr}$ is an
irreducible component of $\V_{\kzr}$.
\vskip4pt
By de Jong's theory of alterations \nmnm{de~Jong}
\cite{dJ},~Theorem~6.5, there exists a finite extension of 
discrete valuation rings $\Srz\hra\Srp:=\Srz[\piz]$ with 
$\piz$ any uniformizing parameter of $\Srp$ and a dominant 
generically finite proper morphism $\W\to \V$ of projective 
$\Srz$-schemes with $\W$ {\it strictly semi-stable\/} over
$\Srp$, i.e., the generic fiber of $\W$ is a smooth projective
variety over ${\rm Quot}(\Srp)$, the special fiber $\W_{\!\kzr}$
is reduced and satisfies: If $\W_{\!\kzr,j}$, $j\in J$ is any 
set of $|J|$ distinct irreducible components of $\W_{\!\kzr}$, 
then $\cap_j \W_{\!\kzr,j}$ is a smooth subscheme of $\W$ 
of codimension $|J|$. Hence the sequence of dominant 
proper morphisms of projective $\Srz$-schemes
\[
\W\to \V\to\lvPixZ
\]
satisfies: Let $\eupq\in \W$ denote 
a fixed preimage of $\eupz$. Then $\eupq$ has codimension
one, because $\eupz$ does so. Further, the local ring 
$\clOq$ of $\eupq\in \W$ as a point of $\W$ dominates 
the local ring $\clOz$ of $\eupz\in \V$, thus one has a 
canonical inclusion $\clOz\hra\clOq$ which gives rise
to a canonical inclusion of the residue fields 
$
\kp{\eta_\uix}\hra\kp{\eupz}\hra\kp{\eupq} \ \
\hbox{corresponding to}\ \ \euq\to\eup\to\eta_\uix.
$ 
Recall that $\clY_{\clOz}\to\lvPt{\clOz}$ is a $\lvZ/p^\mx$-cover 
of smooth $\clOz$-curves with special fiber 
$Y_{\eupz}\to\lvPt{\eupz}$ whose base change under 
$\kp{\eupq}\hra\str2k$ is $Y_{\str2k}\to\lvPt{\str2k}$.
Let $\clY_{\clOq}\to\lvPt{\clOq}$ be the 
base change of $\clY_{\clOz}\to\lvPt{\clOz}$ under
$\clOz\hra\clOq$. Then $\clY_{\clOq}\to\lvPt{\clOq}$ is a
$\lvZ/p^\mx$-cover of proper smooth $\clOq$-curves 
whose special fiber $Y_{\eupq}\to\lvPt{\eupq}$ 
is the base change of $Y_{\eupz}\to\lvPt{\eupz}$ 
under $\kp{\eupz}\hra\kp{\eupq}$. Again, choosing any
$\kp{\eupz}$-embedding of $\kp{\eupq}\hra\str2k$, one
gets that the base change of the special fiber 
$Y_{\eupq}\to\lvPt{\eupq}$ under $\kp{\eupq}\hra\str2k$
becomes $Y_{\str2k}\to\lvPt{\str2k}$. This means that the 
embedding $\str3\phi_{\eta_\uix}: A_\uix\hra\str2k$ 
defined at~$(*)$ above factors through 
$A_\uix\hra\kp{\eta_\uix}\hra\kp{\eupz}\hra\kp{\eupq}$,
reflecting the fact that $\eupq\mapsto\eupz\mapsto\eta_\uix$.
In other words, there exists a $\str2k$-rational point
$\str3\phi_\euq:\kp\euq\to\str2k$ such that the given 
$\str2k$-rational point $\str3\phi_{\eta_\uix}:\kp{\eta_\uix}\to\str2k$ 
defined by $\str3\phi_{\eta_\uix}: A_\uix\to\Spec\str2k$ is of the form
\[
\str3\phi_{\eta_\uix}=
    \str3\phi_\euq\circ(\kp{\eta_\uix}\hra\kp\euq).
\leqno{\indent(**)}
\]
%
\vskip4pt
\noindent
{\bf Step 2}. {\it Finishing the proof of Proposition~\ref{specprop}\/}
\vskip5pt
Let $\llll:=\kappa(W)$ denote the function field of $W$, and
$F:=\kappa(\clY_{\clOq})$ be the function field of $\clY_{\clOq}$.
Then $\clY_{\clOq}\to\lvPt{\clOq}$ has as generic fiber a 
$\lvZ/p^\mx$-cover of complete smooth $\llll$-curves
$\clY_\llll\to\lvPt{\llll}$, and gives rise to a $\lvZ/p^\mx$ 
extension of function field in one variable $\llll(t)\hra F$. 
Since $\clOq$ is a (discrete) valuation ring, and 
$\clY_{\clOq}\to\lvPt{\clOq}$ is a cover of smooth 
$\clOq$-curves, it follows by the discussion in subsection~A), 
that $\clY_{\clOq}\to\lvPt{\clOq}$ is precisely the normalization 
of $\lvPt{\clOq}$ in the function field extension $\llll(t)\hra F$. 
Notice that $\lvPt{\llll}$ is the generic fiber of $\lvPt W$, and 
consider 
\[
\clY_W\to\lvPt W
\] 
the normalization of $\lvPt W$ in the field extension
$\llll(t)\hra F$. We notice that the base change of 
$\clY_W\to\lvPt W$ under $\Spec\clOq\hra W$ is 
precisely $\clY_{\clOq}\to\lvPt{\clOq}$.
\begin{lemma}
\label{speclemma}
Let $x\in\lvAix$ be such that the image 
${\undr p}_{\uix,x}=(p_{1,x},\dots,p_{\mx,x})$ of
${\undr P}_\uix=(P_1,\dots,P_\mx)$ under 
$A_\uix\to\kp x$ satisfies $\deg(p_{\nu,x})=\deg(P_\nu)$ 
for all $\nu=1,\dots,\mx$. Let $y\in\oli\euq$ be a 
preimage of $x\in\lvAix\subset\lvPix$ under 
$\oli\euq\to\oli\eup\to\lvPix$ and $\clO_v$ be a 
valuation ring dominating $\clOy$ with $\kp v=\kp\y$. 
Then $\clY_{\clO_v}\to\lvPt{\clO_v}$ is a cover of 
smooth curves.
\end{lemma}
\begin{proof} Recall that $\clY_{\clOy}\to\lvPt{\clOy}$ 
is the base change of $\clY_W\to\lvPt W$ under the
canonical embedding $\Spec\clOy\hra W$, and in 
particular, $\clY_{\clOy}\to\lvPt{\clOy}$ is the normalization
of $\lvPt{\clOy}$ in the field extension $\llll(t)\hra F$.
Since $y\in\euq$, and the geometric fiber 
$\clY_\euq\to\lvPt\euq$ of $\clY_{\oli\euq}\to\lvPt{\oli\euq}$
is a $\lvZ/p^\mx$-cover of smooth complete curves,
the same holds correspondingly, if one replaces
$\clOy=\clO_{W,y}$ by $\euoy:=\clO_{\oli\euq,y}=\clOy/\euq$,
and $\llll(t)\hra F$ by $\kp\euq(t)\hra F_\euq$, where
$F_\euq:=\kappa(\clY_\euq)$ is viewed as function field 
over $\kp\euq$. Recall 
that the local extension $\kp\euq[[t]]\hra\kp\euq[[z_\euq]]$ 
of $\clY_\euq\to\lvPt\euq$ at $t=0$ is defined by the 
image ${\undr p}_{\uix,\euq}$ of ${\undr P}_\uix$ 
under the canonical embedding 
$A_\uix\hra\kp{\eta_\uix}\hra\kp\euq$. On the other hand, if
$\euox$ denotes the local ring of $x\in\lvAix\subset\lvPix$
then $A_\uix\subset\euox$ and $\euoy$ dominates~$\euox$.
Hence $A_\uix\hra\euoy$ and therefore, 
${\undr p}_{\uix,\euq}$ is defined over $\euoy$. 
Further, by the commutativity of the diagrams
\[
\begin{matrix}
A_\uix&\hra&\euoy&&{\undr P}_\uix&\mapsto&{\undr p}_{\uix,\euq}\cr 
\dwn{}&    &\dwn{}&\hhb{20} &\dwn{}           &             &\dwn{}\cr
\kp x  &\hra&\kp\y & &{\undr p}_{\uix,x}&\mapsto&{\undr p}_{\uix,\y}\cr
\end{matrix}
\]
it follows that the image of ${\undr p}_{\uix,\euq}$ under
the residue homomorphism $\euoy\to\kp\y$ equals the
image of ${\undr p}_{\uix,x}$ under $\kp x\hra\kp\y$.
Thus by the functoriality of the Artin--Schreier--Witt theory, 
it follows that every irreducible component of the special 
fiber of $\lvPt{\euoy}$ dominates the KG-cover of 
$\lvPt{\kp\y}$ defined by ${\undr p}_{\uix,y}$. Since 
$\deg(p_{\nu,y})=\deg(p_{\nu,x})=\deg P_\nu$ for all $\nu$, 
the latter cover must have degree $p^\mx$ and upper ramification 
jumps $\uix=(\ix_1,\dots,\ix_\mx)$. In particular, we can
apply~Lemma~\ref{preplemma}, and conclude that
the special fibers $\clY_y$ and $\clY_v$ are reduced 
and irreducible. 
\vskip2pt
In order to conclude, we notice that by the discussion
above, the normalization $Y_\y\to\clY_\y$ dominates the 
$\uix$-KG-cover of $\lvPt\y$ defined by ${\undr p}_{\uix,y}$. 
Since every $\uix$-KG-cover has as genus a constant  
depending on $\uix$ only, thus including the generic fiber 
it follows that $g_{Y_\y}\geq g_{\clY_\euq}$. We thus 
conclude the proof of Lemma~\ref{speclemma} 
by applying~Lemma~\ref{preplemma}.
\end{proof}

Coming back to the proof of Proposition~\ref{specprop}
we proceed as follows. Let $k$ be any algebraically 
closed field with $\chr(k)=p$, and $Y_k\to\lvPt{k}$ be 
an $\uix$-KG-cover, say with local ring extension 
$k[[t]]\hra k[[z]]$ at $t=0$ defined by 
${\undr p}_\uix=\big(p_1,\dots,p_\mx)$.
\vskip2pt
In notations as introduced right before 
Lemma~\ref{speclemma}, let $x\in\lvAix$ and $\phi_x:\kp x\to k$ 
be such that $\phi_x({\undr p}_{\uix,x})={\undr p}_\uix$.  Since 
$\oli\euq\to\oli\eup\to\lvPix$ is dominant and proper, 
there exists a preimage $\y\in\oli\eupq$ of $x$ such 
that $\kp x\hra\kp\y$ is finite. Since $k$ is algebraically 
closed, there is a $\kp x$-embedding $\phi_\y:\kp\y\hra k$ 
such that $\phi_x=\phi_y\circ(\kp x\hra\kp y)$. In particular, 
if ${\undr p}_{\uix,\y}$ is the image of ${\undr p}_{\uix,x}$ 
under $\kp x\hra\kp\y$, then ${\undr p}_\uix=
\phi_\y({\undr p}_{\uix,y})$.
\vskip4pt
Let $\W_{\kzr,j}$, $j\in J$, be the irreducible components 
of $\W_{\kzr}$ which contain~$y$, and 
$W_J:=\cap_j W_{\kzr,j}$. Then $\W_J$  is a smooth 
$\kzr$-subvariety $\W_J\subset \W_{\!\kzr}$, and the
following hold, see e.g., \nmnm{de~Jong}~\cite{dJ}, 
section~2.16 and explanations thereafter: Let $\clOy$ 
be the local ring of $y\in \W$. There exists a system of 
regular parameters $(u_1,\dots,u_N)$ of $\clOy$ which 
satisfy:
\vskip2pt
$\hhb2$i) $u_j$ defines locally at $\y$ the equation of 
$\W_{\kzr,j}$ and $\piz=u_1\dots u_{n_\y}$.
\vskip2pt
ii) $(u_j)_{n< j \leq N}$ give rise to a regular system of 
parameters at $\y\in \W_J$ in $\clOy/(u_1,\dots,u_{n_\y})$.
\vskip4pt
Let $\eur:=(u_1-u_i)_i\subset\clOy$ be the ideal generated
by all the $u_1-u_i$, $1\leq i\leq N$. Then $\eur$ is a 
regular point with $(u_1-u_i)_i$ a regular system of
parameters, and 
\[
\Srp_\y:=\clOy/\eur
\]
is a discrete valuation ring having 
$\pi_\y:=u_1\,({\rm mod}\,\eur)$ as uniformizing 
parameter, and $\pi_\y^{n_\y}=\piz$. In particular, if 
$\piz$ was an algebraic integer, then so is $\pi_\y$.
\vskip2pt
Let $v_1$ be a valuation of $F$ with center $\eur$, 
and residue field equal to $\kp\eur={\rm Quot}(\Srp_\y)$.  
And further, let $v_0$ be the canonical valuation of 
$\clO_0:=\Srp_y$. Then the valuation ring $\clO_v$ of the 
valuation $v:=v_0\circ v_1$ dominates $\clOy$ and has 
$\kp v=\kp\y$. Hence by Lemma~\ref{speclemma} 
above, it follows that $\clY_{\clO_v}\to\lvPt{\clO_v}$ 
is a $\lvZ/p^\mx$-cover of smooth $\clO_v$-curves. 
Hence by Lemma~\ref{transsmooth}, it follows that
$\clY_{\Srp_y}\to\lvPt{\Srp_y}$ is a $\lvZ/p^\mx$-cover 
of smooth $\Srp_y$-curves.
\vskip2pt
Let $n_\uix={\rm l.c.m.}(n_\y)_\y$, and notice that  
$n_\uix$ is bounded by $n\hhb1!$, where 
$n=\dim(W)-1$. Choose a fixed algebraic integer
$\piz$ such that $\Srp=\Srp_0[\piz]$, and let $\pi_\uix$ 
be defined by $\pi_\uix^{n_\uix}=\piz$. Then there are
canonical embeddings
$\Srp_\y\hra W(\oli\kappa_\y)[\pi_\y]\hra W(k)[\pi_\uix]$,
and the base change of $\clY_{\Srp_\y}\to\lvPt{\Srp_\y}$
under $\Srp_\y\hra W(k)[\pi_\uix]$ is a
$\lvZ/p^\mx$-cover of smooth $W(k)[\pi_\uix]$-curves
\[
\clY_{W(k)[\pi_\uix]}\to\lvPt{W(k)[\pi_\uix]}
\]
with special fiber the $\uix$-KG-cover $Y_k\to\lvPt k$ 
of the given cyclic $\lvZ/p^\mx$-extension $k[[t]]\hra k[[z]]$. 
\vskip2pt
This concludes the proof of Proposition~\ref{specprop}.
\end{proof}

%
\vskip5pt
\noindent
C) \ {\it The strategy of proof for Theorem~\ref{OC}\/}
\vskip2pt
We begin by recalling that there are several forms of the 
Oort Conjecture (OC) which are all equivalent, 
see e.g.\ \nmnm{Saidi}~\cite{Sa},~\S3.1, for detailed proofs.
\vskip2pt
Let $k$ be an algebraically closed field with 
${\rm char}(k)=p>0$. Let $W(k)$ be the ring of Witt 
vectors over $k$, and $W(k)\hra R$ denote finite 
extension of discrete valuation rings. We consider 
the following two situations, which are related to two 
variants of OC:
\vskip5pt
a) $Y\to X$ is a finite (ramified) $G$-cover of complete smooth 
$k$-curves such that the inertia groups at all closed points 
$y\in Y$ are cyclic. 
\vskip5pt
b) $\clX_R$ is a complete smooth $R$-curve with special fiber 
$X$, and $Y\to X$ is as a (ramified) $G$-cover of complete 
smooth curves as in~case~a) above.
\vskip5pt
We say that {\it OC holds over~$R$\/} in case a) or b), if 
there exists a $G$-cover of complete smooth $R$-curves 
$\clY_R\to\clX_R$, with $\clX_R$ the given one in~case~b), 
having the $G$-cover $Y\to X$ as special fiber. And given
a cyclic extension $k[[t]]\hra k[[z]]$, we say that the {\it local 
OC holds over $R$\/} for $k[[t]]\hra k[[z]]$, if there exists a 
smoth smooth lifting $R[[T]]\hra R[[Z]]$ of $k[[t]]\hra k[[z]]$.
%
%
\begin{fact}
\label{equivOC}
(\nmnm{Saidi}~\cite{Sa}, \S3.1) The following hold:
\vskip2pt
{\rm 1)} {\it Local global principle for OC.\/} Let $Y\to X$ be a finite 
$G$-cover with cyclic inertia groups, and for $y\mapsto x$,
let $k[[t_x]]\hra k[[t_y]]$ be the corresponding extension of
local rings. Let $\clX_R$ be some complete smooth $R$-curve 
with special fiber $X$. Then the following are equivalent:
\vskip2pt \ \ \ 
$\hhb2${\rm i)} There is a $G$-cover of complete smooth 
$R$-curves $\clY_R\to\clX_R$ with special fiber $Y\to X$.
\vskip2pt \ \ \ 
{\rm ii)} For all $y\mapsto x$, the local cyclic extension
$k[[t_x]]\hra k[[t_y]]$ has a smooth lifting over $R$.
\vskip4pt
{\rm 2)} {\it Equivalent forms of OC.\/} The following assertions 
are equivalent:
\begin{itemize}
\item[{\rm a)}] OC holds for all $G$-covers $Y\to X$ as in case a), or b).
\vskip2pt
\item[{\rm b)}] OC holds for $G$ cyclic and $X=\lvPt k$.
\vskip2pt
\item[{\rm c)}] OC holds for $G$ cyclic and $X=\lvPt k$
and $Y\to\lvPt k$ branched at $t=0$ only.
\vskip2pt
\item[{\rm d)}] The local OC holds.
\vskip2pt
\item[{\rm e)}] Any of the assertions above, but restricted to cyclic
$p$-groups as inertia groups.
\end{itemize}
\end{fact}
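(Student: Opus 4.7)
The plan is to prove part~1) first---a local--global principle---then derive the equivalences in part~2) as formal consequences. For~1), the direction (i)$\Rightarrow$(ii) is essentially by completion: given the smooth $G$-cover lifting $\clY_R\to\clX_R$, take the formal completion at a closed point $x\in X$ and its preimages in $Y$; this yields a smooth $R$-lifting of $k[[t_x]]\hra k[[t_y]]$ with cyclic Galois group (the decomposition group at $y\mapsto x$), as required.

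For the converse direction (ii)$\Rightarrow$(i), I would follow the formal-patching strategy of \nmnm{Garuti}~\cite{Ga1}. Let $B\subset X$ be the finite branch locus of $Y\to X$. Over the complement $\clX_R\!\setminus\! B$, the restriction of $Y\to X$ is tame (in fact \'etale), hence lifts uniquely to an \'etale $G$-cover of $\clX_R\!\setminus\! B$ by Grothendieck's specialization theorem for the tame fundamental group. Around each $x\in B$, the hypothesis supplies smooth $R$-liftings of the local extensions $k[[t_x]]\hra k[[t_y]]$ for all $y\mapsto x$, which assemble into a smooth $G$-cover of the formal disc around $x$ in $\clX_R$. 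These two pieces of data match on the punctured formal discs because both restrict there to the unique tame lifting of the induced tame cover. Formal patching in the style of Harbater--Stevenson, combined with Grothendieck's existence theorem, then produces a $G$-cover $\clY_R\to\clX_R$ with the prescribed special fiber, and Kato's criterion (Fact~\ref{KCR}) ensures that the glued object is smooth over $R$.

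Part~2) follows formally once~1) is in place. Trivially (a)$\Rightarrow$(b)$\Rightarrow$(c). For (c)$\Rightarrow$(d), invoke the Katz--Gabber correspondence: every cyclic $p$-power extension $k[[t]]\hra k[[z]]$ is the completed local extension at $t=0$ of a unique cyclic cover $Y_k\to\lvPt k$ branched only at $t=0$; a smooth $R$-lifting of this KG-cover restricts, via completion, to a smooth $R$-lifting of the local extension. For (d)$\Rightarrow$(a), apply the local--global principle from~1): a cyclic inertia group of the form $\lvZ/mp^\nx$ with $(p,m)=1$ decomposes as a tame $\lvZ/m$-part, which lifts automatically by Grothendieck, and a wild $\lvZ/p^\nx$-part, handled by~(d). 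This simultaneously justifies the reduction to cyclic $p$-groups in~(e).

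The main obstacle is the formal patching step in~(ii)$\Rightarrow$(i): one must verify that the locally given smooth liftings around branch points and the \'etale lifting on the complement agree on the overlapping punctured formal discs, and that the glued formal data algebrize to an honest smooth proper $R$-scheme. The careful verification of compatibility, algebrization, and smoothness via Kato's criterion is exactly what is carried out in \nmnm{Saidi}~\cite{Sa},~\S1.2 (Propositions~1.2.2 and~1.2.4), on which the present formulation of Fact~\ref{equivOC} relies.
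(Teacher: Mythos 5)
The paper does not prove this Fact---it is explicitly cited as a black box from \nmnm{Saidi}~\cite{Sa},~\S3.1 (and \S1.2), which is also precisely where your sketch ends up pointing---so there is no paper argument to compare against. Your sketch does reproduce the standard strategy from the cited sources, and the overall structure is sound: completion for (i)$\Rightarrow$(ii), formal patching of the \'etale lifting over the complement of the branch locus with the given local liftings for (ii)$\Rightarrow$(i), and the Katz--Gabber/local--global equivalences for part~2). Two small imprecisions are worth flagging. First, the matching on the boundary annuli is not a matter of \emph{tame} lifting: the restriction of $Y\to X$ to the generic point $\Spec k((t_x))$ of the boundary can perfectly well be wildly ramified as an extension of discretely valued fields; what one actually uses is that it is \emph{\'etale as a scheme morphism}, and that finite \'etale covers lift uniquely across the complete (hence Henselian) pair $(\widehat A,\pi\widehat A)$ with $\widehat A/\pi\widehat A\cong k((t_x))$; conflating these two notions of ``unramified'' is exactly where such sketches go wrong if one isn't careful. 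Second, Kato's criterion (Fact~\ref{KCR}) is not needed to show the glued object is smooth---smoothness is local, and every piece of the gluing (the \'etale cover away from the branch locus and the given smooth local liftings) is smooth by construction; Kato's criterion is the tool for \emph{producing} a smooth local lifting from a lifting over the generic fiber, i.e.\ it enters in establishing hypothesis~(ii), not in the patching step. With those corrections your outline is in accord with the Garuti--Sa\"idi arguments the paper relies on.
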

\vskip5pt
Thus in order to prove Theorem~\ref{OC} from Introduction,
we can proceed as follows: Let $Y\to X$ be a given 
$G$-cover of projective smooth $k$-curves, with branch
locus $\Sigma\subset X$. Then for a given algebraic 
integer $\pi$ and $R:=W(k)[\pi]$, and a smooth model
$\clX_R$ of $X$ over $R$ one has: The OC holds for 
$Y\to X$ over $R$ iff the local OC holds for the local
cyclic extension $k[[t_x]]\hra k[[t_y]]$ over $R$ for all
$x\in\Sigma$. Further, the local OC holds for a fixed local 
cyclic extension $k[[t_x]]\hra k[[t_y]]$ over $R$ if and only 
if the local OC holds over $R$ for the $p$-power sub-extension
$k[[t_x]]\hra k[[z_x]]$ of $k[[t_x]]\hra k[[t_y]]$. Thus the global
assertion of Theorem~\ref{OC} is equivalent to the local
assertion for cyclic $p$-power extensions $k[[x]]\hra k[[z]]$.
\vskip5pt
We tackle the case of $p$-power cyclic extensions
$k[[t]]\hra k[[z]]$ as follows.
\vskip5pt
\underbar{Step 1}. Let $\uix=(\ix_1,\dots\ix_\nx)$ be a
fixed upper ramification jumps sequence. By 
Key Lemma~\ref{keylemma1} and~Theorem~\ref{charpOC}
there exists some $N$ and sequences 
$\uix_\mu=(\ix_1,\dots,\ix_{\nx_\mu})$, $1\leq\mu\leq N$, 
depending on $\uix$ only, such that the following hold: 
Let $\euo$ be a complete discrete valuation ring over 
with residue field $k$, and $x_1,\dots,x_N\in\eum_\euo$
be distinct points. Then for every $\uix$-KG-cover 
$Y\to\lvPt k$ there exists a $\lvZ/p^\nx$-cover of projective
smooth $\euo$-curves $\clY_\euo\to\lvPt\euo$ satisfying:  
\vskip2pt
a) The special fiber of $\clY_\euo\to\lvPt\euo$ is the given 
$\uix$-KG-cover $Y\to\lvPt k$.
\vskip2pt
b) The generic fiber $\clY_\kk\to\lvPt\kk$ of 
$\clY_\euo\to\lvPt\euo$ is branched above $x_1,\dots,x_N$ 
only.\footnote{\hhb2N.B., $\clY_\kk\to\lvPt\kk$ is not always
an $\uix$-KG-cover!} 
\vskip3pt
c) The upper ramification jumps above each $x_\mu$ are
$\uix_\mu:=(\ix_1,\dots,\ix_{\nx_\mu})$, $\mu=1,\dots,N$.
\vskip5pt
\underbar{Step 2}. Let $\kk\hra\akk$ be an algebraic 
closure, and $\clY_\akk \to \lvPt\akk$ be the base change 
of $\clY_\kk \to \lvPt\kk$. Then $\clY_\akk \to \lvP^1_\akk$ 
is a $\lvZ/p^\nx$-cover of projective smooth curves with
no essential ramification. 

\begin{hypo}
\label{hypothesis}
In the Notations~\ref{notalast}, suppose that for every 
$\uix_\mu=(\ix_1,\dots,\ix_{\nx_\mu})$ with $\mu=1,\dots,N$,
the subset $\Sigma_{\uix_\mu}\subset\lvA^{{\scriptscriptstyle|}
{\uix_\mu}{\scriptscriptstyle|}}$ is Zariski dense. 
\end{hypo}
For every fixed $\mu=1,\dots,N$, consider the 
$\uix_\mu$-KG-cover $Y_\mu\to\lvPt l$ of the local 
$\lvZ/p^{\nx_\mu}$-extension $\akk[[t_\mu]]\hra\akk[[z_\mu]]$,
where $z_\mu$ and $t_\mu$ are local parameters at 
$\ybf_\mu\mapsto\xbf_\mu$. Then by Proposition~\ref{specprop} 
applied for each $\uix_\mu=(\ix_1,\dots,\ix_\mu)$, 
there exists some algebraic integer $\pi_\mu$ such that
$\uix_\mu$-KG-cover $Y_\mu\to\lvPt l$ has a smooth lifting
over $W(l)[\pi_\mu]$. Thus by the local-global 
principle~Fact~\ref{equivOC}, it follows that if $\pi_\uix$
is an algebraic integer such that $\pi_\mu\in W(l)[\pi_\uix]$
for all $\mu=1,\dots,N$ then $Y_l\to\lvPt l$ has a smooth 
lifting $\clY_{\clO_1}\to\lvPt{\clO_1}$ over $\clO_1:=W(l)[\pi_\uix]$. 
\vskip2pt
Let $v_1$ be the canonical valuation of $\clO_1$,
and $v_0$ be the (unique) prolongation of the valuation 
of $\euo$ to $l$, say having valuation ring $\clO_0$. 
Then the base change $\clY_{\clO_0}\to\lvPt{\clO_0}$ of the 
$\lvZ/p^\nx$-cover of complete curves $\clY_\euo\to\lvPt\euo$
under $\euo\hra\clO_0$ is a $\lvZ/p^\nx$-cover of complete
smooth $\clO_0$-curves with generic fiber $\clY_l\to\lvPt l$.
And the $\lvZ/p^\nx$-cover of complete smooth $l$-curves 
$\clY_l\to\lvPt l$ is the special fiber of the $\lvZ/p^\nx$-cover
of smooth $\clO_1$-curves $\clY_{\clO_1}\to\lvPt{\clO_1}$. 
The setting $v:=v_0\circ v_1$ and letting $\clO$ be the 
valuation ring of $v$, it follows by Lemma~\ref{transsmooth} 
that there exists a smooth lifting of $Y\to\lvPt k$ to a
$\lvZ/p^\nx$-cover of smooth $\clO$-curves $\clY_\clO\to\lvPt\clO$.
\vskip2pt
Since the $\uix$-KG-cover $Y\to\lvPt k$ we started with was 
arbitrary, it follows that the Hypothesis~\ref{hypothesis} implies
that $\Sigma_\uix=\lvAix$. Hence by Proposition~\ref{specprop} 
we conclude that: 
\vskip7pt
\centerline{\it Hypothesis~\ref{hypothesis} implies the 
existence of an algebraic integer $\pi_\uix$ such\/}
\centerline{\it that every $\uix$-KG-cover 
$Y\to\lvPt k$ has a smooth lifting over $W(k)[\pi_\uix]$.\/}
%
%
\vskip7pt
\noindent
D) {\it Concluding the proof of the Oort Conjecture\/}
\vskip5pt
By the observation above, the proof of the Oort Conjecture
is reduced to showing that the Hypothesis~\ref{hypothesis} 
holds for every system of upper ramification indices 
$\uix=(\ix_1,\dots,\ix_\nx)$ which has no essential jump
indices, i.e., $p\hhb1\ix_{\rhal-1}\leq\ix_\rhal< p\hhb1\ix_{\rhal-1}+p$ for 
$\rhal=1,\dots,\nx$. Via the local-global principle~Fact~\ref{equivOC}, 
this fact is equivalent to a (very) special case of the local Oort 
Conjecture, which follows from a more general (but still partial) 
result recently announced by \nmnm{Obus--Wewers}~\cite{O--W}, 
see~\nmnm{Obus}~\cite{Ob},~Theorem~6.28. Here 
is the special case needed here:
\begin{keylemma} 
\label{keylemma2}
{\rm (\nmnm{Special case of Obus--Wewers})}
In notations and context as above, let $k[[t]]\hra k[[z]]$ be 
cyclic extension of degree~$p^\nx$ which has no essential 
ramification. Then the local Oort Conjecture holds for 
$k[[t]]\hra k[[z]]$, i.e., $k[[t]]\hra k[[z]]$ has a smooth
lifting over some finite extension~$R$ of $\,W(k)$ to a
smooth cyclic cover $R[[T]]\hra R[[Z]]$.
\end{keylemma}
\begin{proof}  Recall that Lemma~6.27 from~\nmnm{Obus}~\cite{Ob}
asserts that the local Oort conjecture holds for cyclic extensions
$k[[t]]\hra k[[z]]$ of degree $p^\nx\!$, provided the upper ramification 
jumps $\ux_1\leq\dots\leq \ux_\nx$ satisfy: For every $1\leq\alp< \nx$, 
there is no integer $m$ such that:
\[
\hhb{10}(*)\hhb{80}\ux_{\alp+1}-p\hhb1\ux_\alp < p\hhb1m \leq
   (\ux_{\alp+1}-p\hhb1\ux_\alp)\,
       {{\ux_{\alp+1}}_{\phantom|}\over
       {\ \ux_{\alp+1}-\ux_\alp}\ }.\hhb{100}
\]
Notice that if $k[[t]]\hra k[[z]]$ has no proper essential
ramification jumps, thus by definition 
$\ux_{\alp+1} \leq p\hhb1\ux_\alp +p-1$
for all $1\leq\alp < \nx$, then the hypothesis~$(*)$ above is 
satisfied. Indeed, we first notice that $p\hhb1\ux_\alp\leq \ux_{\alp+1}$
implies that $m$ must be positive. Hence setting 
$\delta:=\ux_{\alp+1}-p\hhb1\ux_\alp$ and $u:=\ux_\alp$, 
the second inequality becomes
$p\hhb1m\big(\delta+(p-1)u\big)\leq\delta(\delta+p\hhb1u)$,       
which is equivalent to $p(p-1)mu\leq\delta(\delta+pu-pm)$.
Hence taking into account that $\delta\leq p-1$, we get:  
$p(p-1)mu\leq(p-1)(p-1+pu-mp)$, thus dividing by $(p-1)$,
we get: $p\hhb1mu\leq p-1+p\hhb1u-mp$, or equivalently,
$p(m-1)(u+1)+1\leq0$, which does not hold for any 
positive integer~$m$. 
\end{proof}
This concludes the proof of Theorem~\ref{OC}.
\begin{remark} 
\label{remarkOW}
The main result of \nmnm{Obus--Wewers},
see~\cite{Ob},~Theorem~6.28, asserts that if the 
hypothesis~$(*)$ above is satisfied for all $3\leq\alp< \nx$, 
then the local Oort conjecture holds for the cyclic extension 
$k[[t]]\hra[[z]]$. Therefore, the Oort Conjecture {\it holds 
unconditionally for $n=3$,\/} and the above hypothesis~$(*)$ 
kicks in only for exponents $3<\nx$. Unfortunately, for $3 < \nx$, 
the hypothesis~$(*)$ becomes very restrictive indeed. 
Setting namely $\delta:=\ux_{\alp+1}-p\hhb1\ux_\alp$ and 
$u:=\ux_\alp$, we have that $\ux_{\alp+1}=\delta+p\hhb1u$ 
for some $0\leq\delta$. If $0<\delta$, then $\delta$ is 
prime to $p$ hence writing $\delta=r\hhb1p-\eta$ with 
$1\leq\eta\leq p-1$, the hypothesis~$(*)$ for $m:=r$ implies that:
\vskip5pt
\centerline{\it At least one of the inequalities \ 
$\delta < r\hhb1p\leq\delta(\delta+p\hhb1u)/\big(\delta+(p-1)u\big)\,$
           does not hold.}
\vskip5pt  
\noindent         
Since the first inequality holds, the second inequality 
must \underbar{not} hold. Therefore we must have
$r\hhb1p > \delta(\delta+p\hhb1u)/\big(\delta+(p-1)u\big)$.
Since the LHS equals $\delta+\delta u/\big(\delta+(p-1)u\big)$,
and $p\hhb1r-\delta=\eta$, the above inequality 
is equivalent to $\eta > \delta u/\big(\delta+(p-1)u\big)$,
hence to $\eta\delta+\eta(p-1)u>\delta u$, and finally to
$\eta^2(p-1)>\big(\delta-\eta(p-1)\big)(u-\eta)$. Thus 
since $\delta-\eta(p-1)=(r-\eta)p$, the last inequality 
becomes $\eta^2(p-1)>p(r-\eta)(u-\eta)$. Since $p>p-1$, 
the last inequality implies $\eta^2>(r-\eta)(u-\eta)$. 
On the other hand, one has
$1\leq\eta <p$ and $p^2\leq u$ for $1<\alp$, thus 
$\eta^2<u-\eta$. Therefore, in order to satisfy the inequality, 
one must have $r\leq\eta$. Hence the hypothesis~$(*)$
for $1<\alp< \nx$ is equivalent to:
\vskip5pt
\centerline{\ $(*)'$ \ {\it 
If $\,\ux_{\alp+1}=p\hhb1\ux_\alp+p\hhb1r_\alp -\eta_\alp$
with $\,0\leq\eta_\alp< p$, then $\,0\leq r_\alp\leq\eta_\alp$
for all $\,1 < \alp < \nx$.\/}}
\vskip5pt
\noindent
In particular, $\eta_\alp=1$ implies $\delta_\alp= p-1$, 
and $\delta_\alp\leq(p-1)^2$ in general. It seems to me 
that the reformulation~$(*)'$ is better/easier than the original 
formulation of hypothesis~$(*)$.
\end{remark}

\bibliographystyle{plain}

\begin{bibdiv}

\begin{biblist}

\bibliographystyle{plain}

\bibselect{big}

\end{biblist}

\end{bibdiv}

\end{document}